\definecolor{mno}{rgb}{0.5,0.1,0.5}
\def\<{\langle}
\def\>{\rangle}
\newcommand{\R}{\mathds R}
\newcommand{\Pp}{\mathds P}
\newcommand{\Ee}{\mathds E}
\newcommand{\I}{\mathds 1}
\newtheorem{theorem}{Theorem}[section]
\newtheorem{lemma}[theorem]{Lemma}
\newtheorem{proposition}[theorem]{Proposition}
\newtheorem{corollary}[theorem]{Corollary}
\theoremstyle{definition}
\newtheorem{example}[theorem]{Example}
\newtheorem{remark}[theorem]{Remark}
\begin{document}

\allowdisplaybreaks

\title[H\"{o}lder Continuous Semigroups
for Operators of Variable Order] {Uniform H\"{o}lder Estimates on
Semigroups Generated by Non-Local Operators of Variable Order}

\author{Dejun Luo\qquad Jian Wang}
\thanks{\emph{D.\ Luo:}
Institute of Applied Mathematics, Academy of Mathematics and Systems Science,
Chinese Academy of Sciences, Beijing 100190, P.R. China.
\texttt{luodj@amss.ac.cn}}
\thanks{\emph{J.\ Wang:}
School of Mathematics and Computer Science, Fujian Normal
University, Fuzhou 350007, P.R. China.
\texttt{jianwang@fjnu.edu.cn}}
\date{}

\begin{abstract} We consider the non-local operator of variable order as follows
  $$Lf(x)= \int_{\R^d\setminus\{0\}}\big(f(x+z)-f(x)-\<\nabla f(x),z\>
  \I_{\{|z|\le 1\}}\big)\frac{n(x,z)}{|z|^{d+\alpha(x)}}\,dz.$$
Under mild conditions on $\alpha(x)$ and $n(x,z)$, we establish the H\"{o}lder
regularity for the associated semigroups. The proof is based on the probabilistic
coupling method, and it successfully applies to both stable-like processes
in the sense of Bass and time-change of symmetric stable processes.
\medskip

\noindent\textbf{Keywords:} Stable-like process, non-local operator, H\"{o}lder
continuity, coupling, jump process

\medskip

\noindent \textbf{MSC 2010:} 60J25, 60J75.
\end{abstract}

\maketitle

\section{Introduction and Main Result}\label{section1}
In the last few years, more and more people tend to use non-local integral operators
(or, equivalently, processes with jumps) to model problems in mathematical physics and finance,
since in many applications jump processes are more realistic models than continuous processes.
While there is a rich literature concerned with the regularity for diffusion
semigroups generated by second order elliptic differential operators (see e.g. \cite{Gt, Kr, Ce}
and the references therein), the study of regularity properties of the semigroups associated
to non-local operators is far from complete.

In this paper we aim to establish the H\"{o}lder regularity for Markov semigroups associated
with the infinitesimal generator
  \begin{equation}\label{ope-1}
  Lf(x)= \int_{\R^d\setminus\{0\}}\big(f(x+z)-f(x)-\<\nabla f(x), z\>
  \I_{\{|z|\le 1\}}\big)\frac{n(x,z)}{|z|^{d+\alpha(x)}}\,dz.
  \end{equation}
This is a reasonably general integro-differential operator which
includes, for example, many of the operators considered by
probabilists. In probabilistic term, when both $\alpha(x)$ and
$n(x,z)$ are constant functions, the process associated with $L$
given by \eqref{ope-1} is essentially a (rotationally) symmetric
stable process. In our setting, however, the operator $L$ given by
\eqref{ope-1} can be of variable order, i.e. $\alpha(x)$ is a
function of $x$. Therefore, in general the corresponding process
should behave like a symmetric stable process at each point $x$, but
vary from point to point. Though it is known that the semigroups of
symmetric stable processes enjoy nice smooth properties, 
there are many essential differences
between symmetric stable processes and those processes associated with $L$
above, even in the case that $\alpha(x)$ is a constant function, see for instance
\cite{B2, BT, BR}. The purpose of our paper is to
investigate what conditions on $\alpha(x)$ and $n(x,z)$ are needed to
guarantee the H\"{o}lder continuity of the corresponding semigroups.
The functions $\alpha(x)$ and $n(x,z)$ are supposed to be
at least uniformly continuous, see Assumption {\bf(H)} below for
the precise conditions. We stress that, we are
able to treat the case where the index function $\alpha(x)$ is
logarithmical uniformly continuous and the coefficient $n(x,z)$
enjoys the symmetry property, i.e. $n(x,z)=n(x,-z)$ for each $x$,
$z\in\R^d$, or the case that the function $\alpha(x)$ is
logarithmical uniformly continuous such that
$\inf_{x\in\R^d}\alpha(x)>1$. 
In particular, we will
establish H\"{o}lder regularity for Markov semigroups of
stable-like processes in the sense of Bass \cite{B1}.

To state the main result, we first introduce the following assumptions on the operator $L$.

\paragraph{\textbf{Assumption (H)}}
\begin{itemize}
\item[(H1)] The martingale problem associated with the operator $L$ has a unique solution,
and the associated unique $L$-process enjoys the strong Markov property and does not explode
starting from any $x\in\R^d.$
\item[(H2)] There exist two constants $0<\alpha_0\le \alpha_2<2$
such that for all $x\in\R^d$, $\alpha(x)\in [\alpha_0,\alpha_2]$.
The function $\alpha(x)$ satisfies that
  \begin{equation}\label{h2}
  \lim_{|x-y|\to0} |\alpha(x)-\alpha(y)|\log \frac{1}{|x-y|}=0.
  \end{equation}
\item[(H3)] There are two positive constants $c_1$ and $c_2$ such that for all $x$, $z\in\R^d$,
  $$c_1\le n(x,z)\le c_2.$$
The function $(x,z)\mapsto n(x,z)$ is uniformly continuous on $\R^{2d}$ in the sense that
  \begin{equation}\label{h3}
  \lim_{r\to0} \bigg[\sup_{x\in\R^d, |z_1-z_2|\le r} |n(x,z_1)-n(x,z_2)|
  +\sup_{|z|\le 1, |x-y|\le r} |n(x,z)-n(y,z)|\bigg]=0.
  \end{equation}
\item[(H4)] The functions $\alpha(x)$ and $n(x,z)$ satisfy that
  \begin{equation}\label{h4}
  \lim_{r\to 0} \sup_{|x-y|\le r}\!{|x-y|^{\alpha(x)\wedge \alpha(y)-2}}\!
  \int_{\!\left\{\!\frac{|x-y|}{2}<|z|\le1\!\right\}} \!\langle x-y,z\rangle\!\bigg[\frac{n(y,z)}{|z|^{d+\alpha(y)}}-\frac{n(x,z)}{|z|^{d+\alpha(x)}}\bigg]dz=0.
  \end{equation}
\end{itemize}

Let us make some comments on Assumption \textbf{(H)}.

\begin{remark}
(1) There are a few papers that handle with martingale problems for
non-local operators of variable order, e.g.\ see \cite{B1, MP, Ts,
H, K, T1} and the references therein. Among the first is \cite{B1}
by Bass, which proved in the one-dimensional situation
well-posedness of the martingale problem for
$-(-\Delta)^{\alpha(x)}$ under weak assumptions on $\alpha(x)$, see
Example \ref{exm1} below for more details. Following \cite{B1}, we
call the associated pure jump type Markov process the stable-like
process with exponent $\alpha(x)$. In recent years there has been
considerable interest in operators whose jump kernel is of the form
$\frac{n(x,z)}{|z|^{d+\alpha(x)}}\,dz$, e.g.\ see \cite{BL, BK1,
BK2,  BBCK}. The corresponding martingale problem was considered in
\cite{T1}.

(2) The assumption (H2) on the index function $\alpha(x)$ is
standard to ensure the uniqueness of the solution to the martingale
problem for $L$, see \cite[Corollary 2.3]{B1} and \cite[Assumption
2.2]{T1}. In particular, \eqref{h2} holds when $\alpha(x)$ is H\"{o}lder continuous.

(3) According to \cite[Assumption 1.1]{BT} or \cite[Assumption 2.1]{T1}, the condition that the
function $n$ is bounded above and below is natural for the existence of solutions
to the martingale problem of $L$. This is the analogue of uniform
ellipticity and boundedness conditions in the theory of second order
elliptic differential operators. 

(4) The assumption (H4) is our technique condition. It is clear that
\eqref{h4} holds if $n(x,z)=n(x,-z)$ for all $x$, $z\in\R^d$.  Note
that the symmetry of the function $n(x,z)$ with respect to $z$ is a
commonly assumed condition for non-local operators, e.g.\ see
\cite{BL,CS, CZ}. We will see from Proposition \ref{pro-ex} below
that \eqref{h4} is also satisfied in many situations, in particular,
when $\inf_{x\in\R^d}\alpha(x)>1.$
\end{remark}

Let $((\Pp^x)_{x\in\R^d}, (X_t)_{t\ge0})$ be a strong Markov process
such that for each $x\in\R^d$, the probability measure $\Pp^x$ is the
unique solution to the martingale problem for $L$ starting at $x$. Let $\Ee^x$
be the expectation of the process $(X_t)_{t\ge0}$ starting from $x\in\R^d$.
For any $x\in\R^d$ and $t>0$, let
  $$P_tf(x):=\Ee^x(f(X_t)),\quad f\in B_b(\R^d).$$
Now, it is time to present the main
contribution of our paper.

\begin{theorem}\label{th1.1} Under assumption {\bf(H)}, for any
$\beta\in(0,\alpha_0\wedge1)$ there exists a constant $C:=C(\beta,
\alpha(x), n(x,z))>0$ such that for all $f\in B_b(\R^d)$ and $t>0$,
  $$\sup_{x\neq y}\frac{|P_tf(y)-P_tf(x)|}{|y-x|^\beta}\le \frac{C \|f\|_\infty}{(t\wedge 1)^{\beta/\alpha_0}}.$$
In particular, for any $f\in B_b(\R^d)$ and $t>0$, $P_tf$ is
$\beta$-H\"{o}lder continuous with any
$\beta\in(0,\alpha_0\wedge1)$.
\end{theorem}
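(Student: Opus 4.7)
The plan is to carry out the proof via the probabilistic coupling method advertised in the abstract. For each pair $x\ne y$, I would construct a Markovian coupling $((X_t,Y_t))_{t\ge 0}$ with marginals $\Pp^x$ and $\Pp^y$ in which the diagonal $\{X=Y\}$ is absorbing, and estimate the tail $\Pp(X_t\ne Y_t)$. The coupling inequality $|P_tf(y)-P_tf(x)|\le 2\|f\|_\infty\,\Pp(X_t\ne Y_t)$ then reduces the theorem to proving
\[
  \Pp(X_t\ne Y_t)\le C\,|x-y|^\beta/(t\wedge 1)^{\beta/\alpha_0}.
\]
The coupling I would use combines, at each state with $\xi_t:=|X_t-Y_t|>0$ and $e_t:=(Y_t-X_t)/\xi_t$, three ingredients: a reflection (mirror) coupling for jumps $z$ of size $|z|>\xi_t/2$, pairing a jump $z$ of $X$ with the reflected jump $z-2\<z,e_t\>e_t$ of $Y$; a synchronous coupling of jumps with $|z|\le\xi_t/2$ up to the common intensity of the two kernels; and an independent attachment of the residual mismatched parts of the two kernels. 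This is the stable-like analogue of the classical reflection coupling for diffusions, adapted to the state dependence of $\alpha$ and $n$.

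The analytical heart is to apply the coupled generator $\mathcal{A}$ to $\phi(\xi_t)$, with $\phi(r):=r^\beta\wedge 1$ and $\beta\in(0,\alpha_0\wedge 1)$, and to establish a differential inequality
\[
  \mathcal{A}\phi(\xi_t)\le -c\,\xi_t^{\beta-\alpha(X_t)\wedge\alpha(Y_t)}\qquad\text{on } \{0<\xi_t\le 1\}.
\]
The leading negative term comes from the reflection-coupled large jumps: by the symmetrisation produced by the mirror coupling they act on $\xi_t$ like the fractional Laplacian applied to $r\mapsto r^\beta$ at index $\alpha=\alpha(X_t)\wedge\alpha(Y_t)\ge\alpha_0>\beta$, which is classically of order $-\xi_t^{\beta-\alpha}$. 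Three types of error term have to be shown to be of strictly smaller order: the mismatch between the radial kernels $|z|^{-d-\alpha(X_t)}$ and $|z|^{-d-\alpha(Y_t)}$, absorbed via \eqref{h2} and the elementary bound $\bigl||z|^{-\alpha(x)}-|z|^{-\alpha(y)}\bigr|\lesssim |z|^{-\alpha(x)\vee\alpha(y)}\bigl|\log|z|\bigr||\alpha(x)-\alpha(y)|$; the mismatch $n(X_t,\cdot)\ne n(Y_t,\cdot)$, absorbed via the uniform continuity \eqref{h3}; and the asymmetric part of the small-jump compensator $\<\nabla\phi(\xi_t),\cdot\>$ inherited from \eqref{ope-1}, which is bounded precisely by the integral appearing in hypothesis \eqref{h4}. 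The main obstacle is this last error in the regime $\alpha_0>1$, where the compensator cannot be dropped and the non-symmetry of $n$ produces a genuine drift; assumption (H4) is the quantitative condition tailored exactly to absorb this drift into the negative reflection contribution.

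Once the differential inequality is in place, Dynkin's formula combined with Jensen's inequality yields $u'(t)\le -c\,u(t)^{1-\alpha_0/\beta}$ for $u(t):=\Ee\phi(\xi_t)$; since $1-\alpha_0/\beta<0$, scalar comparison integrates to a finite-time extinction, $u(t)=0$ for all $t\ge c'\,|x-y|^{\alpha_0}$. Because $\phi$ vanishes only at $0$, this means $\Pp(\xi_t>0)=0$ past this deterministic time, while for $t<c'|x-y|^{\alpha_0}$ the trivial bound $\Pp(X_t\ne Y_t)\le 1$ is automatically absorbed by a constant multiple of $|x-y|^\beta/(t\wedge 1)^{\beta/\alpha_0}$. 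Feeding the resulting tail bound into the coupling inequality above completes the proof.
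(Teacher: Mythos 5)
Your coupling is inverted relative to the one the paper actually constructs, and this is fatal. The paper applies \emph{reflection to small jumps} ($|z|\le |x-y|/2$) and \emph{synchronous (march) coupling to large jumps} ($|z|>|x-y|/2$), whereas you propose the opposite. The paper's choice is not cosmetic. Writing $s=2\langle x-y,z\rangle/|x-y|^2$, a reflected jump sends $\xi=|x-y|$ to $\xi|1\pm s|$, and the entire negative drift in the argument comes from the elementary concavity inequality
\[
(1+s)^\beta+(1-s)^\beta-2\le \beta(\beta-1)s^2,\qquad \beta\in(0,1),\ |s|\le 1,
\]
which is applicable \emph{only} because $|z|\le|x-y|/2$ forces $|s|\le1$. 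After integration this produces the term $-c\,\xi^{\beta-\alpha(x)\wedge\alpha(y)}$ in Proposition 3.1. For your large-jump reflection $|s|$ is unbounded and this inequality is unavailable. Worse, the sign is actually wrong: after scaling $z=\xi w$, the reflected large-jump contribution to $\widetilde L\,r^\beta$ is a constant multiple of $\xi^{\beta-\alpha}\int_{|w|>1/2}\bigl[|1+2w_1|^\beta+|1-2w_1|^\beta-2\bigr]|w|^{-d-\alpha}\,dw$, and already in $d=1$ this integral is strictly \emph{positive} (the tail $\sim |w_1|^{\beta-\alpha-1}$ dominates the small negative dip near $|w_1|=1/2$; for instance at $\alpha=3/2,\ \beta=1/2$ the integrand is $\ge 0$ for $|w_1|\ge 5/8$ and the positive mass overwhelms). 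So your claimed $\mathcal A\phi(\xi)\le -c\,\xi^{\beta-\alpha}$ is not established — the putative leading term has the wrong sign. Synchronizing the large jumps, as the paper does, is also what renders the state-dependent kernel mismatch harmless there, since synchronous jumps leave $\xi$ unchanged.

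Independently, the Dynkin-plus-Jensen ``finite-time extinction'' step is too strong and cannot be correct. Your differential inequality is asserted only on $\{0<\xi_t\le1\}$, but the coupled process can exit this set, and you never control the excursions (nor the mass of $\phi$ at $\xi>1$). If it were literally true that $\Ee\,\phi(\xi_t)=0$ for all $t\ge c'|x-y|^{\alpha_0}$, then for every $t>0$ and all $x,y$ with $|x-y|$ small one would get $P_tf(x)=P_tf(y)$, and hence $P_tf$ would be globally constant for every $f$ — evidently false. The paper avoids this by introducing the exit time $S_\varepsilon=\inf\{t:|X'_t-Y'_t|>\varepsilon\}$ and the approach times $T_n$, establishing separately $\widetilde\Ee^{(x,y)}(T\wedge S_\varepsilon)\lesssim |x-y|^\beta\varepsilon^{\alpha_0-\beta}$ and $\widetilde\Pp^{(x,y)}(T>S_\varepsilon)\le |x-y|^\beta/\varepsilon^\beta$, and then optimizing in $\varepsilon$; the conclusion is a quantitative tail bound on $\widetilde\Pp^{(x,y)}(T\ge t)$, not exact coalescence by a deterministic time.
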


H\"{o}lder continuity and Harnack inequality of bounded functions
that are harmonic in a domain with respect to pure jump non-local
operators are established in \cite{BK2, BK1}. In fact, the setting
of \cite{BK2} is more general in the sense that the jump measure of
the corresponding operator is not required to have a density with
respect to the Lebesgue measure. In particular, according to
\cite[Theorem 2.2 and Example 2]{BK2}, under assumptions (H1)--(H3)
and the additional condition that either $\inf_{x\in\R^d}
\alpha(x)>1$ or $n(x,z)=n(x,-z)$ for all $x$, $z\in\R^d$ (which
implies that (H4) holds true), we know that any bounded function
which is harmonic in a domain with respect to the operator $L$ given
by \eqref{ope-1} is H\"{o}lder continuous. This, along with
\cite[Proposition 3.1]{BK1} and \cite[Section 4]{BL}, implies that
the associated resolvents are H\"{o}lder continuous too, see e.g.
\cite[Proposition 3.3]{BKK}. If moreover the associated Markov
process is symmetric and possesses bounded transition density
function, then, according to \cite[Proposition 3.4]{BKK} and by
using the spectral theory, we can get the H\"{o}lder continuity of
the corresponding semigroups. However, we can see from \cite[Section
5]{FT} or \cite[Section 4]{SW2014} that the process generated by
the operator $L$ is in general non-symmetric. On the other hand,
consider, for example, the shift semigroup $P_tf(x):= f(x+t)$ which is
certainly not strong Feller, but has a strong Feller resolvent, that is,
the resolvent maps $B_b(\R)$ into $C_b(\R)$.
Indeed, for any $\lambda>0$ and $f\in B_b(\R)$,
  $$U_\lambda f(x)=\int_0^\infty e^{-\lambda t}f(x+t)\, dt
  =e^{\lambda x}\int_x^\infty e^{-\lambda t}f(t)\, dt$$
is globally Lipschitz continuous on $\R$ with Lipschitz
constant $(\lambda+1)\|f\|_\infty$.
This counterexample indicates that, in the non-symmetric situation,
the H\"{o}lder continuity of resolvent operators does not imply that
of the corresponding semigroups. Therefore, Theorem \ref{th1.1} is not
a direct or a simple consequence of the existing results on the
H\"{o}lder continuity of bounded functions that are harmonic in a domain.

As applications of Theorem \ref{th1.1}, we consider the following
two examples.

\begin{example} \label{exm1} Consider the following integro-differential operator
  $$Lf(x)= w(x)\int_{\R^d\setminus\{0\}}\Big(f(x+z)-f(x)-\<\nabla f(x), z\>
  \I_{\{|z|\le 1\}}\Big)|z|^{-d-\alpha(x)}\,dz$$
for $f\in C_c^\infty(\R^d)$. The weight function $w(x)$ is chosen in
such a way that
  $$w(x)= \alpha(x)2^{\alpha(x)-1}\, \frac{\Gamma\big((\alpha(x)+ d)/2\big)}
  {\pi^{d/2}\,\Gamma\big(1-\alpha(x)/2\big)},$$
then $L e_\xi(x)=-|\xi|^{\alpha(x)}e_\xi(x)$, where
$e_\xi(x)=e^{i \langle x,\xi\rangle}$, see e.g.\ \cite[Exercise 18.23, page
184]{ber-for}. With this norming, $L$ can be written as a
pseudo-differential operator $-p(x,D)$ with the symbol
$-|\xi|^{\alpha(x)}$,
  $$Lf(x) = \int e^{i\langle x,\xi\rangle} |\xi|^{\alpha(x)}\widehat f(\xi)\,d\xi
  = -(-\Delta)^{\alpha(x)} f(x),$$
which implies that $L=-(-\Delta)^{\alpha(x)}$ is a stable-like
operator in the sense of Bass \cite{B1}.

For any $r>0$, let
  $$\rho(r):=\sup_{|x-y|\le r} |\alpha(x)-\alpha(y)|$$
be the modulus of continuity of the index function $\alpha$.
Suppose the following conditions hold:
\begin{itemize}
\item[(i)] $0<\alpha_0:=\inf_{x\in\R^d} \alpha(x)\le \sup_{x\in\R^d}\alpha(x)=:\alpha_2<2$;
\item[(ii)] $\lim\limits_{r\to0}\rho(r)|\log r|=0$, and $\int_0^1\rho(r)/r\,dr<\infty.$
\end{itemize} Then, according \cite[Corollary 2.3 and Remark 7.1]{B1},
the martingale problem associated with the operator $L$ is well posed.
It is clear that assumptions (H2) and (H4) are satisfied. In particular, in this
case $n(x,z)=w(x)$ is independent of $z\in\R^d$. On the other hand, since the function
  $$r\mapsto r2^{r-1}\, \frac{\Gamma\big((r+d)/2\big)}{\pi^{d/2}\,
  \Gamma(1-r/2)}$$
is smooth on $[\alpha_0,\alpha_2]\subset(0,2)$, we know from the definition of $w(x)$
and the assumption $\lim_{r\to0}\rho(r)|\log r|=0$ that the function
$w(x)$ is bounded from above and below and uniformly continuous
on $\R^d$. Thus, Assumption (H3) also holds. Therefore, by Theorem
\ref{th1.1}, the associated semigroup of the stable-like process is
$\beta$-H\"{o}lder continuous for any $\beta\in(0,\alpha_0\wedge1).$
\end{example}

\begin{example}\label{exm2} Suppose that $A(x)= (a_{i,j}(x))_{1\le i,j\le d}$
is a bounded continuous $(d\times d)$-matrix-valued function on $\R^d$
that is nondegenrate at every $x\in\R^d$, and $(Z_t)_{t\ge0}$ is a
(rotationally) symmetric $\alpha$-stable process on $\R^d$ for some
$0<\alpha<2$. It is shown in \cite[Theorem 7.1]{BC} that for every
$x\in\R^d$ the stochastic differential equation (SDE)
  \begin{equation}
  \label{exm2-1}dX_t=A(X_{t-})\,dZ_t, \quad X_0=x
  \end{equation}
has a unique weak solution. Although it is assumed in \cite{BC} that
$d\ge 2$, the results here are valid for $d=1$ as well. In
particular, for $d=1$, $(X_t)_{t\ge0}$ is a time-change of symmetric
$\alpha$-stable process. 
Using the It\^{o} formula, one deduces (see the formula above \cite[(7.2)]{BC}) that
$(X_t)_{t\ge0}$ has generator
  $$Lf(x)=\int \Big(f(x+A(x)u)-f(x)-\<\nabla f(x), A(x)u\>\I_{\{|u|\le 1\}}\Big)
  \frac{c_{d,\alpha}}{|u|^{d+\alpha}}\,du,$$
where $c_{d,\alpha}$ is a positive constant depending on $d$ and $\alpha$.
A change of variable formula $z=A(x)u$ yields that
  $$Lf(x)=\int\Big(f(x+z)-f(x)-\langle\nabla f(x), z\rangle\I_{\{|z|\le 1\}}\Big) \frac{k(x,z)}{|z|^{d+\alpha}}\,dz,$$
where
  $$k(x,z)=\frac{c_{d,\alpha}}{|\textrm{det} A(x)|}\left(\frac{|z|}{|A(x)^{-1}z|}\right)^{d+\alpha}.$$
Here, $\textrm{det} A(x)$ is the determinant of the matrix $A(x)$ and $A(x)^{-1}$ is the
inverse of $A(x)$. In particular, according to the argument in \cite[Section 7]{BC}, we
know that the martingale problem for the operator $L$ is well posed. It is also easy to see
that assumptions (H2) and (H4) hold true.

Next, suppose furthermore that $A(x)=(a_{i,j}(x))_{1\le i,j\le d}$
is uniformly continuous, bounded and elliptic (that is, there are
positive constants $\lambda_1$ and $\lambda_2$ such that
$\lambda_1\textrm{Id}\le A(x)\le \lambda_2\textrm{Id}$ for
every $x\in\R^d$). Then, the assumption (H3) is also satisfied by the
definition of $k(x,z)$. Therefore, according to Theorem
\ref{th1.1}, we conclude that the semigroup corresponding to the SDE
\eqref{exm2-1} is $\beta$-H\"{o}lder continuous for any
$\beta\in(0,\alpha\wedge1).$  \end{example}

When $A(x)=(a_{i,j}(x))_{1\le i,j\le d}$ is H\"{o}lder continuous,
Example \ref{exm2} has been studied in \cite[Corollary 1.3]{CZ}.
Indeed, in this case sharp two-sided estimates on the transition
density of the SDE \eqref{exm2-1} are presented there, while
H\"{o}lder estimates (for all $\alpha\in(0,2)$) and  gradient
estimates (for $\alpha\in [1,2))$ on the heat kernel are obtained,
see \cite[(1.9) and (1.15)]{CZ}. In particular, when
$\alpha\in[1,2)$, the associated semigroups are even Lipschitz
continuous. We also refer the reader to \cite{WXZ} for the recent
study of gradient estimates for SDEs driven by multiplicative
L\'{e}vy noise. Though the assertion of Example \ref{exm2} is much
weaker than \cite[Corollary 1.3]{CZ}, we want to stress that our
approach is completely different from that of \cite{CZ}, and we
indeed only require the uniform continuity of the coefficient
$A(x)=(a_{i,j}(x))_{1\le i,j\le d}$. This fact convinces us that the
coupling method could yield regularity of semigroups associated with
non-local operators with coefficients of low regularity.
Indeed,
when $\alpha_0:=\inf_{x\in\R^d}\alpha(x)>1$, and the functions
$\alpha(x)$ and $n(x,z)$ fulfill stronger continuity, the coupling
method allows us to establish better regularity properties
(including the Lipschitz continuity) for the Markov semigroups
associated with the operator $L$ given by \eqref{ope-1}. For this,
we need the following two classes of reference functions:
  \begin{align*}
  \mathscr{D}=&\left\{\varphi\in C^2((0,2])\cap L^1((0,2];dx): \varphi>0, \varphi'<0\textrm{ and } \varphi''>0 \right\}\\
    \mathscr{D}_\theta=&\left\{\varphi\in \mathscr{D}: \lim_{r\to0} \left[\frac{r^{\theta-2}}{\varphi'(r)}+\frac{\varphi'(2r)r}{\varphi(r)}\right]\!=0 \textrm{ and }\! \limsup_{r\to0} \frac{\varphi''(r)r}{\varphi'(r)} <\theta-2\right\}\!, \theta\in(0,2).
  \end{align*}

\begin{theorem}\label{f-thm} Assume that $\alpha_0:=\inf_{x\in\R^d}\alpha(x)>1$, and assumptions
${\rm(H1)}$--${\rm(H3)}$ hold. Let $\varphi\in \mathscr{D}_{\alpha_0}$ and
  $$\Psi(r):=-\frac{\varphi'(2r)r}{\varphi(r)},\quad r\in(0,1].$$ For any $x,y\in\R^d$, set
\begin{align*}
 A(x,y):=&|\alpha(x)-\alpha(y)|\left(\log \frac{1}{|x-y|}\right)+\sup_{|z|\le 1}
  |n(x,z)-n(y,z)|\\
  &+\sup_{z\in\R^d, |z_1-z_2|\le |x-y|}|n(z,z_1)-n(z,z_2)|.
 \end{align*}
If
  \begin{equation}\label{f-cor-2}\begin{split}
  \lim_{|x-y|\to0} \frac {A(x,y)}{\Psi(|x-y|)}=0,
  \end{split}
  \end{equation}
then there exist constants $\varepsilon_0\in(0,1)$ and $C>0$ such that for all $f\in B_b(\R^d)$, $t>0$ and $|x-y|\leq 1$,
 $$\sup_{x\neq y}\frac{{|P_t f(x)-P_t f(y)|}}{\int_0^{|x-y|}\varphi(s)\,ds}\le
  C\|f\|_\infty \inf_{\varepsilon\in(0,\varepsilon_0]} \left[\frac{1}{\int_0^\varepsilon\varphi(s)\,ds}-\frac{1}{t \varphi'(2\varepsilon) \varepsilon^{2-\alpha_0}}\right].$$
\end{theorem}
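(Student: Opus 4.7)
The plan is to refine the coupling argument that underlies Theorem \ref{th1.1}, replacing the role of the power function $|x-y|^\beta$ by the general Lyapunov function
$$F(r) := \int_0^r \varphi(s)\,ds, \qquad r \in (0, 2].$$
First, I would build a Markovian coupling $((X_t, Y_t))_{t \geq 0}$ of two $L$-processes starting from $(x,y)$: for jumps of size comparable to $|X_s - Y_s|$ (say in the annulus $\{|z| > |X_s - Y_s|/2\}$) I use the \emph{mirror coupling} $z \mapsto z - 2\<z, e_s\>e_s$ with $e_s := (X_s - Y_s)/|X_s - Y_s|$, and for the remaining small jumps I use synchronous coupling. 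Well-posedness of this coupled martingale problem off the diagonal follows from (H1) together with the same arguments that deliver Theorem \ref{th1.1}.

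The main technical step is the generator estimate. Let $\tilde L$ denote the generator of the coupled process and set $G(u, v) := F(|u-v|)$. By a careful Taylor expansion of $F(|\cdot|)$ around the reflected jump, together with the explicit form of the L\'evy kernel $n(x,z)/|z|^{d+\alpha(x)}$, one expects
\begin{equation*}
\tilde L G(x,y) \leq c_0\, \varphi'(2|x-y|)\, |x-y|^{2 - \alpha_0}
\end{equation*}
for $|x-y|$ sufficiently small, with $c_0 > 0$; note the right-hand side is negative since $\varphi' < 0$. The dominant negative contribution comes from the mirror-coupled jumps with $|z| \simeq |x-y|$: the factor $\varphi'(2|x-y|)$ tracks the scale of this annulus, while the power $|x-y|^{2-\alpha_0}$ arises from matching the first-order Taylor correction against the singular moment $\int |z|^{1-\alpha_0}\,dr$ of the kernel. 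Two families of error terms need to be dominated: (i) the mismatch between the jump kernels at $(x,z)$ and $(y, Rz)$, which after the usual reflection/cancellation is bounded (up to constants) by $A(x,y)\,\varphi(|x-y|)$ and hence made negligible against $\Psi(|x-y|)\,\varphi(|x-y|)$ by \eqref{f-cor-2}; and (ii) the principal-value compensator $\<\nabla G, z\>\I_{\{|z| \leq 1\}}$, whose absolute integrability against the singular kernel is guaranteed precisely by the standing hypothesis $\alpha_0 > 1$. The defining condition $\limsup_{r \to 0} r\,\varphi''(r)/\varphi'(r) < \alpha_0 - 2$ of $\mathscr{D}_{\alpha_0}$ absorbs the convex second-order contribution from the synchronous part of the coupling.

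With the generator estimate in hand, for $\varepsilon \in (0,\varepsilon_0]$ I introduce $\tau_\varepsilon := \inf\{s \geq 0 : |X_s - Y_s| \leq \varepsilon\}$. Dynkin's formula applied to $G$ up to the time $t \wedge \tau_\varepsilon$, combined with the monotonicity built into $\mathscr{D}_{\alpha_0}$ that allows one to replace $|X_s - Y_s|$ by $\varepsilon$ in the right place, and with Markov's inequality, yields
$$\Pp^{(x,y)}(\tau_\varepsilon > t) \leq \frac{F(|x-y|)}{-c_0\, t\, \varphi'(2\varepsilon)\, \varepsilon^{2 - \alpha_0}}.$$
I would then decompose $|P_t f(x) - P_t f(y)|$ via the strong Markov property at $\tau_\varepsilon$: on $\{\tau_\varepsilon > t\}$ the trivial bound $2\|f\|_\infty$ combined with the above probability estimate furnishes the second term in the infimum, while on $\{\tau_\varepsilon \leq t\}$ the difference is reduced to estimating $|P_s g(u) - P_s g(v)|$ for $|u - v| \leq \varepsilon$ and $s \leq t$; applying the same coupling/Markov machinery at scale $\varepsilon$ (together with the contraction $\Ee[F(|X_t - Y_t|)] \leq F(|x-y|)$ that $\tilde L G \leq 0$ already delivers) produces a term of order $\|f\|_\infty F(|x-y|)/F(\varepsilon)$. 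Optimising over $\varepsilon \in (0,\varepsilon_0]$ gives the claimed infimum.

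The hardest step is the generator estimate, and within it three points are delicate: matching the powers of $|z|$ in the Taylor expansion against the $|z|^{-d-\alpha(x)}$ singularity to produce exactly $|x-y|^{2-\alpha_0}$; using $\alpha_0 > 1$ cleanly to tame the principal-value compensator after one side of the jump has been reflected; and, above all, exploiting the full strength of \eqref{f-cor-2} to dominate the cross terms between the reflected jump $Rz$ and the spatially-varying coefficients $\alpha(\cdot)$ and $n(\cdot,\cdot)$. Also worth highlighting is that the final decomposition must control the inner semigroup $P_{t-\tau_\varepsilon}f$ at arguments within distance $\varepsilon$ without a priori H\"older continuity; this is the reason why the bound naturally takes the form of an infimum over $\varepsilon$ rather than a pointwise regularity estimate.
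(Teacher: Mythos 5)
Your proposal has the overall shape right (Lyapunov function $F(r)=\int_0^r\varphi$, generator estimate of the form $\widetilde L G\leq c_0\varphi'(2|x-y|)|x-y|^{2-\alpha_0}$, Dynkin plus Markov's inequality), but two structural choices diverge from the paper in ways that create genuine gaps.

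First, the coupling is reversed. The paper reflects the \emph{small} jumps $\{|z|\leq |x-y|/2\}$ and uses parallel (march) coupling for the \emph{large} jumps $\{|z|>|x-y|/2\}$; you propose the opposite. The contraction in $\widetilde L_{1,1}$ comes precisely from pairing the small reflected jumps $z$ and $\varphi_{x,y}(z)$: for $|z|\leq |x-y|/2$ the new difference is $\bigl(1+\tfrac{2\<x-y,z\>}{|x-y|^2}\bigr)(x-y)$ with factor in $[0,2]$, so $F$ drops by a second-order amount by concavity (Lemma 3.4). If instead you reflect the jumps with $|z|>|x-y|/2$, the factor $\tfrac{2\<x-y,z\>}{|x-y|^2}$ is unbounded and the distance after a reflected large jump can jump far above $|x-y|$, while the synchronously-coupled small jumps produce no contraction at all; the dominant negative term $c_1\varphi'(2|x-y|)|x-y|^{2-\alpha_0}$ would not appear.

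Second, the stopping-time analysis does not close. You stop at $\tau_\varepsilon=\inf\{s:|X_s-Y_s|\leq\varepsilon\}$, the first \emph{hitting} time of the ball, and apply Dynkin on $[0,t\wedge\tau_\varepsilon]$. But on that interval $|X_s-Y_s|>\varepsilon$, whereas the generator estimate (Corollary 3.6) only furnishes the negative bound $\widetilde L f_n(|x-y|)\leq C_0\varphi'(2\varepsilon)\varepsilon^{2-\alpha_0}$ for $1/n\leq |x-y|<\varepsilon\leq\varepsilon_0$, i.e.\ at \emph{small} distances; for $|x-y|>\varepsilon_0$ the sign is not controlled. The paper instead uses the \emph{exit} time $S_\varepsilon=\inf\{t:|X'_t-Y'_t|>\varepsilon\}$ together with the approximations $T_n=\inf\{t:|X'_t-Y'_t|\leq 1/n\}$, so Dynkin's formula up to $T_n\wedge S_\varepsilon$ only sees distances in $[1/n,\varepsilon)$ where the estimate holds; this yields $\widetilde\Ee^{(x,y)}[T\wedge S_\varepsilon]\leq F(|x-y|)/(-C_0\varphi'(2\varepsilon)\varepsilon^{2-\alpha_0})$ and $\widetilde\Pp^{(x,y)}(T>S_\varepsilon)\leq F(|x-y|)/F(\varepsilon)$, and then simply $|P_tf(x)-P_tf(y)|\leq\|f\|_\infty\,\widetilde\Pp^{(x,y)}(T\geq t)$ with $\widetilde\Pp(T\geq t)\leq\widetilde\Pp(T\wedge S_\varepsilon>t)+\widetilde\Pp(T>S_\varepsilon)$. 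Your concluding step, ``apply the same coupling/Markov machinery at scale $\varepsilon$'' after $\tau_\varepsilon$, is precisely the quantity you are trying to estimate and is not resolved; the paper avoids this circularity by never restarting the semigroup.

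One smaller imprecision: the second defining condition $\limsup_{r\to 0} r\varphi''(r)/\varphi'(r)<\alpha_0-2$ is not used to control a ``convex second-order contribution from the synchronous part'' (which, for the paper's coupling, contributes nothing to changes of $|x-y|$); it is used in Corollary 3.6 to show that $r\mapsto\varphi'(2r)r^{2-\alpha_0}$ is increasing near $0$, so that $\varphi'(2|x-y|)|x-y|^{2-\alpha_0}\leq\varphi'(2\varepsilon)\varepsilon^{2-\alpha_0}$ uniformly for $|x-y|<\varepsilon$.
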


As applications of Theorem \ref{f-thm}, we have the following two
typical examples. The first one deals with the Lipschitz continuity,
and it follows from Theorem \ref{f-thm} by taking
$\varphi(r)=1-1/(\log\log(54/r))$; while the second one treats the
log-Lipschitz continuity, and it is a consequence of Theorem
\ref{f-thm} by taking $\varphi(r)=\log^\beta(6/r)$ for any
$\beta>0$.

\begin{example}\label{ttexp}
\begin{itemize}
\item[(1)] If
   \begin{equation}\label{rrtt}
  \begin{split}
 \lim_{|x-y|\to0}\left[ \log \frac{1}{|x-y|} \left(\log\log\frac{1}{|x-y|}\right)^2\right]
 A(x,y)=0,
  \end{split}\end{equation} then there exists a constant
$C>0$ such that for all
$f\in B_b(\R^d)$ and $t>0$,
  $$\sup_{x\neq y}\frac{|P_tf(y)-P_tf(x)|}{|y-x|}\le \frac{C \|f\|_\infty }{(t\wedge e^{-2})^{1/\alpha_0}|\log(t\wedge e^{-2})|^{-1/\alpha_0}\big|\log|\log(t\wedge e^{-2})|\big|^{-2/\alpha_0}}.$$
In particular, for any $f\in B_b(\R^d)$ and $t>0$, $P_tf$ is
Lipschitz continuous.

\item[(2)] If   \begin{align*}
 \lim_{|x-y|\to0}\left( \log \frac{1}{|x-y|}\right) A(x,y)=0,
 \end{align*}
 then, for any $\beta>0$, there exists a constant
$C>0$ such that for all $f\in
B_b(\R^d)$ and $t>0$,
  $$\sup_{x\neq y}\frac{|P_tf(y)-P_tf(x)|}{|y-x|\cdot \big|\log|x-y|\big|^\beta}\le \frac{C
   \|f\|_\infty}{(t\wedge e^{-1})^{1/\alpha_0}{|\log(t\wedge e^{-1})|}^{-1/\alpha_0+\beta}}.$$
\end{itemize}
\end{example}

\begin{remark} Comparing Example \ref{ttexp}(1) with Theorem \ref{th1.1} and Example \ref{ttexp}(2),
we can find that in order to yield the Lipschitz continuity of the Markov semigroups, weak
continuity assumptions on the functions $\alpha(x)$ and $n(x,z)$ are required. On the one hand,
by Theorem \ref{f-thm}, the associated semigroup is still Lipschitz continuous if
\eqref{rrtt} is replaced by the following weaker condition
  $$\lim_{|x-y|\to0}\left[ \log \frac{1}{|x-y|}\left(\log\log\frac{1}{|x-y|} \right)\left(\log\log\log\frac{1}{|x-y|}\right)^2\right] A(x,y)=0,$$
and so on. On the other hand, if there exists some constant $\delta\in(0,1]$ such that
  \begin{align*}\lim_{|x-y|\to0} \frac{1}{|x-y|^\delta}
  \bigg[&|\alpha(x)-\alpha(y)|+\sup_{|z|\le 1}
  |n(x,z)-n(y,z)|\\
  &+\sup_{z\in\R^d, |z_1-z_2|\le |x-y|}|n(z,z_1)-n(z,z_2)|\bigg]=0,
  \end{align*}
i.e. the functions $\alpha(x)$ and $n(x,z)$ are $\delta$-H\"{o}lder
continuous, then \eqref{rrtt} holds too.
\end{remark}

Quite a lot progress has been made during the last decades on heat
kernel estimates for non-local operators, see e.g. \cite{CK2003, BJ,
CK2008, CKK1, CZ} and the references therein. According to the
remark above and motivated by \cite{CZ}, one may wish to construct
the fundamental solution for the operator $L$ defined by
\eqref{ope-1}, and to establish sharp two-sided estimates as well as
its fractional derivative and gradient estimates, which immediately
lead to some regularity properties of the associated semigroups.
However, as mentioned above, since the operator $L$ has variable
order and is generally non-symmetric, it remains a big challenge to
search for heat kernel estimates on it. To the authors' knowledge,
there is very few result on this topic, see \cite{Kolo}. This once
again illustrates the power of coupling
method for non-local operators with variable order. 
To show clearly the practicality of Theorem \ref{th1.1},
we present the following sufficient conditions for assumption (H4).

\begin{proposition}\label{pro-ex} Assume that ${\rm (H2)}$ and ${\rm (H3)}$ hold.
Then, we have the following two statements.
\begin{itemize}
\item[(1)] If $\alpha_0:=\inf_{x\in\R^d}\alpha(x)>1$, then \eqref{h4} is fulfilled.
\item[(2)] If $\alpha_2:=\sup_{x\in\R^d}\alpha(x)<1$, and
  \begin{equation}\label{ttt}
  \lim_{r\to0}\sup_{|x-y|\le r} \frac{|\alpha(x)-\alpha(y)|+\sup_{|z|\le 1}|n(x,z)-n(y,z)|}
  {|x-y|^{1-\alpha(x)\wedge\alpha(y)}}=0,\end{equation}
then \eqref{h4} is satisfied. In particular, if the two functions $\alpha(x)$ and
$x\mapsto n(x,z)$ are Lipschitz continuous (uniformly in $z\in B(1):=\{y\in\R^d:|y|\leq 1\}$),
then \eqref{ttt} holds true.
\end{itemize}
\end{proposition}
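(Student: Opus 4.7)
The plan is to decompose the bracketed kernel difference in \eqref{h4} into two pieces and bound each using the moduli of continuity supplied by (H2) and (H3). Concretely, I would write
$$\frac{n(y,z)}{|z|^{d+\alpha(y)}}-\frac{n(x,z)}{|z|^{d+\alpha(x)}}=\frac{n(y,z)-n(x,z)}{|z|^{d+\alpha(y)}}+n(x,z)\Big[|z|^{-d-\alpha(y)}-|z|^{-d-\alpha(x)}\Big],$$
call the corresponding two contributions to the integral in \eqref{h4} $I_1$ and $I_2$, and use $|\langle x-y,z\rangle|\le|x-y||z|$. Setting $\eta(r):=\sup_{|x-y|\le r,\,|z|\le 1}|n(x,z)-n(y,z)|$, which tends to zero by \eqref{h3}, and passing to polar coordinates, we get
$$|I_1|\lesssim|x-y|\,\eta(|x-y|)\int_{|x-y|/2}^1 r^{-\alpha(y)}\,dr.$$
For $I_2$ the mean value theorem applied to $\alpha\mapsto|z|^{-\alpha}=e^{-\alpha\log|z|}$, together with the fact that $|z|^{-\tilde\alpha}\le|z|^{-\alpha(x)\vee\alpha(y)}$ for $|z|\le 1$, yields
$$|I_2|\lesssim|x-y|\,|\alpha(x)-\alpha(y)|\int_{|x-y|/2}^1 r^{-\alpha(x)\vee\alpha(y)}\,|\log r|\,dr.$$

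For part (1), when $\alpha_0>1$ both powers $\alpha(y)$ and $\alpha(x)\vee\alpha(y)$ exceed $1$, so the two radial integrals behave like $|x-y|^{1-\alpha(y)}$ and $|x-y|^{1-\alpha(x)\vee\alpha(y)}\log(1/|x-y|)$ respectively. Multiplying by $|x-y|^{\alpha(x)\wedge\alpha(y)-2}$ and invoking the elementary identity $\alpha(x)\wedge\alpha(y)-\alpha(x)\vee\alpha(y)=-|\alpha(x)-\alpha(y)|$, I arrive at
$$|x-y|^{\alpha(x)\wedge\alpha(y)-2}(|I_1|+|I_2|)\lesssim|x-y|^{-|\alpha(x)-\alpha(y)|}\big[\eta(|x-y|)+|\alpha(x)-\alpha(y)|\log(1/|x-y|)\big].$$
Assumption (H2) forces $|x-y|^{-|\alpha(x)-\alpha(y)|}\to 1$ and $|\alpha(x)-\alpha(y)|\log(1/|x-y|)\to 0$, while (H3) gives $\eta(|x-y|)\to 0$; this establishes \eqref{h4}.

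For part (2), $\alpha_2<1$ makes both radial integrals uniformly bounded in $|x-y|$, since $r^{-\alpha}$ and $r^{-\alpha}|\log r|$ are integrable on $(0,1)$ when $\alpha<1$. Hence
$$|x-y|^{\alpha(x)\wedge\alpha(y)-2}(|I_1|+|I_2|)\lesssim\frac{|\alpha(x)-\alpha(y)|+\eta(|x-y|)}{|x-y|^{1-\alpha(x)\wedge\alpha(y)}},$$
which vanishes as $|x-y|\to 0$ by hypothesis \eqref{ttt}. The final Lipschitz corollary is then immediate: if $\alpha$ and $x\mapsto n(x,z)$ are Lipschitz then the numerator is $O(|x-y|)$, and since $\alpha(x)\wedge\alpha(y)\ge\alpha_0>0$ the ratio is of order $|x-y|^{\alpha(x)\wedge\alpha(y)}\to 0$.

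The main subtlety, rather than a genuine obstacle, is the exponent mismatch between the $\alpha(x)\wedge\alpha(y)$ appearing in the prefactor of \eqref{h4} and the $\alpha(x)\vee\alpha(y)$ forced upon us by bounding $|z|^{-\alpha}$ from above on $\{|z|\le 1\}$. This mismatch produces the compensating factor $|x-y|^{-|\alpha(x)-\alpha(y)|}$ in part (1), and it is precisely the log-Dini type condition \eqref{h2} in (H2) that keeps this factor bounded; without that strength of continuity the proof of (1) would fail.
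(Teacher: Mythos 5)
Your proof is correct and follows essentially the same route as the paper's. The only cosmetic difference is the algebraic split of the kernel difference: you write it as two terms, $\frac{n(y,z)-n(x,z)}{|z|^{d+\alpha(y)}}$ plus $n(x,z)[|z|^{-d-\alpha(y)}-|z|^{-d-\alpha(x)}]$, bounding the second via the mean value theorem with the exponent $\alpha(x)\vee\alpha(y)$, whereas the paper first compares both $|z|^{-d-\alpha(x)}$ and $|z|^{-d-\alpha(y)}$ to the common reference $|z|^{-d-\alpha(x)\wedge\alpha(y)}$ and then splits into three nonnegative terms. Both decompositions produce the same radial integrals, the same compensating factor $|x-y|^{-|\alpha(x)-\alpha(y)|}$ in part (1), and the same bound $\bigl(|\alpha(x)-\alpha(y)|+\eta(|x-y|)\bigr)/|x-y|^{1-\alpha(x)\wedge\alpha(y)}$ in part (2), so there is no genuine difference in the argument. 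Your closing remark on why the log-modulus of continuity in (H2) is exactly what controls the factor $|x-y|^{-|\alpha(x)-\alpha(y)|}$ is a fair summary of the mechanism the paper also relies on.
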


To prove Theorem \ref{th1.1}, the main technique we adopt here is
the coupling method as in \cite{CL, Cr, PW} for diffusion processes.
Recently, there are some progress on the coupling property of
L\'{e}vy processes, see e.g.\ \cite{W1,BSW,SSW}. All these papers
rely heavily on the fact that the associated operators are local
operators or the particular characterization of L\'{e}vy processes.
Since the corresponding integro-differential operator $L$ given by
\eqref{ope-1} is of variable order, our techniques differ
significantly from the papers cited above. In particular, our new
coupling is constructed in the following way: by reflection for
small jumps and by parallel displacement (called march coupling in
\cite{CL}) for large jumps, see the next section for more details.
Let us mention that, so far, our paper seems to be the first attempt
to study regularity properties of the semigroups of purely nonlocal L\'{e}vy
type operators by using the coupling method.

The remainder of this paper is arranged as follows. In the next section, we shall
construct a new Markov coupling operator $\tilde L$ of $L$, and prove that there exists a
coupling process associated with it. The last section is devoted to the proofs of Theorem
\ref{th1.1} and other assertions in Section \ref{section1}.

\section{Coupling Operator and Coupling Process}\label{section22}

We begin with the construction of a Markov coupling operator for the
generator $L$ given by \eqref{ope-1}. First, for any $x$, $y$ and
$z\in\R^d$, set
  $$\varphi_{x,y}(z):=
  \begin{cases}
  z-\frac{2\langle x-y, z\rangle}{|x-y|^2}(x-y), & x\neq y;\\
  -z, &x=y.\\
  \end{cases}
  $$
It is clear that $\varphi_{x,y}:\R^d\to\R^d$ enjoys the following three properties:
\begin{itemize}
\item[(A1)] $\varphi_{x,y}(z)=\varphi_{y,x}(z)$ and $\varphi^2_{x,y}(z)=z$, i.e.\
$\varphi_{x,y}^{-1}(z)=\varphi_{x,y}(z)$;
\item[(A2)] $|\varphi_{x,y}(z)|=|z|$;
\item[(A3)] $(z-\varphi_{x,y}(z))\, /\!/ \, (x-y)$ and $(z+\varphi_{x,y}(z))\perp (x-y).$
\end{itemize}

Next, for any $x$, $y$ and $z\in\R^d$, let
  $$\tilde{n}(x,y,z):=n(x,z)\wedge n(y,z)\wedge n(x,\varphi_{x,y}(z))\wedge n(y,\varphi_{x,y}(z)),$$
and for any $f\in C_b^2(\R^{2d}),$ let
  $$\nabla_xf(x,y):=\left(\frac{\partial f(x,y)}{\partial x_i}\right)_{1\le i\le d},\quad \nabla_yf(x,y):=
  \left(\frac{\partial f(x,y)}{\partial y_i}\right)_{1\le i\le d}.$$

Now, for any $f\in C_b^2(\R^{2d})$, we define
  \begin{align*}
  \widetilde{L}_1f(x,y)&:=\frac{1}{2}\bigg[\int_{\left\{|z|\le \frac{|x-y|}{2}\right\}} \!\!\Big(f(x+z,y+\varphi_{x,y}(z))-f(x,y)-\<\nabla_xf(x,y), z\> \I_{\{|z|\le1\}}\\
  &\hskip83pt -\<\nabla_yf(x,y),\varphi_{x,y}(z)\> \I_{\{|z|\le1\}}\Big)
  \frac{\tilde{n}(x,y,z)}{|z|^{d+\alpha(x)}\vee |z|^{d+\alpha(y)}}\,dz\\
  &\hskip15pt +\int_{\left\{|z|\le \frac{|x-y|}{2}\right\}}\!\! \Big( f(x+\varphi_{x,y}(z),y+z)-f(x,y)
  -\<\nabla_yf(x,y), z\> \I_{\{|z|\le1\}}\\
  &\hskip83pt -\<\nabla_xf(x,y), \varphi_{x,y}(z)\> \I_{\{|z|\le1\}}\Big)
  \frac{\tilde{n}(x,y,z)}{|z|^{d+\alpha(x)}\vee |z|^{d+\alpha(y)}}\,dz\bigg]\\
  &\hskip15pt +\int_{\left\{|z|\le \frac{|x-y|}{2}\right\}}\Big(f(x+z,y)-f(x,y)-\<\nabla_xf(x,y), z\> \I_{\{|z|\le1\}}\Big)\,\\
  &\hskip83pt \times\bigg(\frac{n(x,z)}{|z|^{d+\alpha(x)}}-\frac{\tilde{n}(x,y,z)}{|z|^{d+\alpha(x)}\vee |z|^{d+\alpha(y)}}\bigg)\,dz\\
  &\hskip15pt +\int_{\left\{|z|\le \frac{|x-y|}{2}\right\}}\Big(f(x,y+z)-f(x,y)-\<\nabla_yf(x,y), z\> \I_{\{|z|\le1\}}\Big)\,\\
  &\hskip83pt \times\bigg(\frac{n(y,z)}{|z|^{d+\alpha(y)}}-\frac{\tilde{n}(x,y,z)}{|z|^{d+\alpha(x)}\vee |z|^{d+\alpha(y)}}\bigg)\,dz
  \end{align*}
and
  \begin{align*}
  \widetilde{L}_2f(x,y):=&\int_{\left\{|z|>\frac{|x-y|}{2}\right\}}\Big(f(x+z,y+z)-f(x,y)-\<\nabla_xf(x,y), z\> \I_{\{|z|\le1\}}\\
  &\hskip62pt -\langle\nabla_yf(x,y), z\rangle \I_{\{|z|\le1\}}\Big)\,\frac{n(x,z)\wedge n(y,z)}{|z|^{d+\alpha(x)}\vee |z|^{d+\alpha(y)}}\,dz\\
  &+\int_{\left\{|z|>\frac{|x-y|}{2}\right\}}\Big(f(x+z,y)-f(x,y)-\<\nabla_xf(x,y), z\> \I_{\{|z|\le1\}}\Big)\\
  &\hskip65pt \times\bigg(\frac{n(x,z)}{|z|^{d+\alpha(x)}}-\frac{n(x,z)\wedge n(y,z)}{|z|^{d+\alpha(x)}\vee |z|^{d+\alpha(y)}}\bigg)\,dz \\
  &+\int_{\left\{|z|>\frac{|x-y|}{2}\right\}}\Big(f(x,y+z)-f(x,y)-\<\nabla_yf(x,y), z\> \I_{\{|z|\le1\}}\Big)\\
  &\hskip68pt \times\bigg(\frac{n(y,z)}{|z|^{d+\alpha(y)}}-\frac{n(x,z)\wedge n(y,z)}{|z|^{d+\alpha(x)}\vee |z|^{d+\alpha(y)}}\bigg)\,dz.
    \end{align*}

Finally, for any $f\in C_b^2(\R^{2d})$, define
  \begin{equation}\label{coup-1}
  \widetilde{L} f(x,y):=\widetilde{L}_1 f(x,y)+\widetilde{L}_2 f(x,y).
  \end{equation}
We can conclude that

\begin{lemma}
The operator $ \widetilde{L}$ defined by \eqref{coup-1} is the coupling operator
of the operator $L$ given by \eqref{ope-1}, i.e. for any $g,$ $h\in C_b^2(\R^d)$ and $x$,
$y\in\R^d$,
  $$ \widetilde{L} H(x,y)=L g(x)+L h(y),$$
where $H(x,y)=g(x)+h(y).$
\end{lemma}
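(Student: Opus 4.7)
The plan is a direct verification. I would substitute $H(x,y)=g(x)+h(y)$ into the definitions of $\widetilde L_1$ and $\widetilde L_2$, and show that after reorganising the integrals and performing one change of variables via $\varphi_{x,y}$, the $x$- and $y$-pieces separate and sum to $Lg(x)+Lh(y)$. Since $\nabla_x H(x,y)=\nabla g(x)$ and $\nabla_y H(x,y)=\nabla h(y)$, and every difference $H(x+a,y+b)-H(x,y)$ splits as $(g(x+a)-g(x))+(h(y+b)-h(y))$, each integrand in $\widetilde L H$ splits cleanly into a pure $g$-part depending only on $x$ and a pure $h$-part depending only on $y$. It thus suffices to collect the $g$-pieces on the two regions $\{|z|\le|x-y|/2\}$ and $\{|z|>|x-y|/2\}$ and check that they sum to the corresponding two halves of $Lg(x)$; the $h$-pieces are handled by the symmetry $(x,g)\leftrightarrow(y,h)$ built into the definition of $\widetilde L$.

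First I would treat the large-jump region $\{|z|>|x-y|/2\}$, which appears only in $\widetilde L_2$. The march-coupling (first) integral of $\widetilde L_2$ contributes a $g$-piece with weight $(n(x,z)\wedge n(y,z))/(|z|^{d+\alpha(x)}\vee|z|^{d+\alpha(y)})$, while the second integral contributes the same $g$-integrand with the complementary weight $n(x,z)/|z|^{d+\alpha(x)}-(n(x,z)\wedge n(y,z))/(|z|^{d+\alpha(x)}\vee|z|^{d+\alpha(y)})$. The third integral of $\widetilde L_2$ contributes no $g$-piece. Summing recovers the intensity $n(x,z)/|z|^{d+\alpha(x)}$ on this region, which is precisely the large-jump half of $Lg(x)$.

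The small-jump region $\{|z|\le|x-y|/2\}$ of $\widetilde L_1$ is the only non-trivial step. The first two integrals each carry a factor $\tfrac12$; on substitution, the first produces a $g$-piece in argument $z$ and the second produces a $g$-piece in argument $\varphi_{x,y}(z)$. In the second I would perform the change of variable $z'=\varphi_{x,y}(z)$. By (A1) and (A2), $\varphi_{x,y}$ is a measure-preserving involution (its differential is orthogonal, being a Euclidean reflection); the ball $\{|z|\le|x-y|/2\}$ and the cut-off $\I_{\{|z|\le 1\}}$ are invariant, the denominator $|z|^{d+\alpha(x)}\vee|z|^{d+\alpha(y)}$ is invariant, and, crucially, $\tilde n(x,y,\cdot)$ is invariant under $\varphi_{x,y}$ because the four quantities in its defining minimum are merely permuted when $z$ is replaced by $\varphi_{x,y}(z)$. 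After the change of variables the two halves coincide, the factor $\tfrac12$ disappears, and one obtains a single $g$-integral with weight $\tilde n(x,y,z)/(|z|^{d+\alpha(x)}\vee|z|^{d+\alpha(y)})$. Adding the third integral of $\widetilde L_1$, which supplies the complementary weight $n(x,z)/|z|^{d+\alpha(x)}-\tilde n(x,y,z)/(|z|^{d+\alpha(x)}\vee|z|^{d+\alpha(y)})$, again yields the full intensity $n(x,z)/|z|^{d+\alpha(x)}$, this time on $\{|z|\le|x-y|/2\}$.

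Combining the two regions produces precisely $Lg(x)$, and the analogous computation for the $h$-pieces gives $Lh(y)$. The only genuinely delicate point is the change-of-variables step, particularly the verification that $\tilde n(x,y,\cdot)$ is invariant under $\varphi_{x,y}$; everything else is routine bookkeeping, organised around the decomposition $\R^d\setminus\{0\}=\{|z|\le|x-y|/2\}\cup\{|z|>|x-y|/2\}$.
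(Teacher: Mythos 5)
Your proposal is correct and follows essentially the same route as the paper: reduce by linearity to collecting the $g$- and $h$-pieces separately, use the complementary weights to recover the full intensity $n(x,z)/|z|^{d+\alpha(x)}$ on each region, and, for the small-jump reflection terms, exploit that $\varphi_{x,y}$ is a measure-preserving involution under which the kernel $\tilde n(x,y,z)/(|z|^{d+\alpha(x)}\vee|z|^{d+\alpha(y)})\,dz$ is invariant. The paper phrases the linearity step as verifying $\widetilde L f(x)=Lf(x)$ for $f$ depending on the first variable only, which is the same observation.
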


\begin{proof} Since $\widetilde{L}$ is a linear operator, it suffices to verify that
  $$ \widetilde{L} f(x)=Lf(x),\quad f\in C_b^2(\R^d),$$
where, on the left hand side, $f$ is regarded as a bivariate function on $\R^{2d}$.

First we have
  \begin{align*}
  \widetilde{L}_1 f(x)
  &=\frac{1}{2}\bigg[\int_{\left\{|z|\le \frac{|x-y|}{2}\right\}} \Big(f(x+z)-f(x)-\<\nabla f(x), z\> \I_{\{|z|\le1\}}\Big)\\
  &\hskip87pt \times \frac{\tilde{n}(x,y,z)}{|z|^{d+\alpha(x)}\vee |z|^{d+\alpha(y)}}\,dz\\
  &\hskip24pt +\int_{\left\{|z|\le \frac{|x-y|}{2}\right\}} \Big( f(x+\varphi_{x,y}(z))-f(x)-\<\nabla f(x), \varphi_{x,y}(z)\> \I_{\{|z|\le1\}}\Big)\\
  &\hskip90pt \times\,\frac{\tilde{n}(x,y,z)}{|z|^{d+\alpha(x)}\vee |z|^{d+\alpha(y)}}\,dz\bigg]\\
  &\hskip13pt +\int_{\left\{|z|\le \frac{|x-y|}{2}\right\}}\Big(f(x+z)-f(x)-\<\nabla f(x), z\> \I_{\{|z|\le1\}}\Big)\,\\
  &\hskip82pt \times\bigg(\frac{n(x,z)}{|z|^{d+\alpha(x)}}-\frac{\tilde{n}(x,y,z)}
  {|z|^{d+\alpha(x)}\vee |z|^{d+\alpha(y)}}\bigg)\,dz.
  \end{align*}
By (A1) and the definition of $\tilde{n}(x,y,z)$, we know that
the kernel $\frac{\tilde{n}(x,y,z)}{|z|^{d+\alpha(x)}\vee
|z|^{d+\alpha(y)}}\,dz$ is invariant under the transformation
$z\mapsto \varphi_{x,y}(z)$. This, along with (A2) and the equality
above, leads to
  \begin{align*}
  \widetilde{L}_1 f(x)
  &=\int_{\left\{|z|\le \frac{|x-y|}{2}\right\}} \!\!\Big(f(x+z)-f(x)-\<\nabla f(x), z\> \I_{\{|z|\le1\}}\Big)\\
  &\hskip75pt \times\,\frac{\tilde{n}(x,y,z)}{|z|^{d+\alpha(x)}\vee |z|^{d+\alpha(y)}}\,dz\\
  &\hskip13pt +\int_{\left\{|z|\le \frac{|x-y|}{2}\right\}}\Big(f(x+z)-f(x)-\<\nabla f(x), z\> \I_{\{|z|\le1\}}\Big)\,\\
  &\hskip75pt \times\bigg(\frac{n(x,z)}{|z|^{d+\alpha(x)}}-\frac{\tilde{n}(x,y,z)}
  {|z|^{d+\alpha(x)}\vee |z|^{d+\alpha(y)}}\bigg)\,dz\\
  &=\int_{\left\{|z|\le \frac{|x-y|}{2}\right\}}\Big(f(x+z)-f(x)-\<\nabla f(x), z\> \I_{\{|z|\le1\}}\Big) \frac{n(x,z)}{|z|^{d+\alpha(x)}} \,dz.
  \end{align*}
Next
  \begin{align*}
  \widetilde{L}_2 f(x)&=\int_{\left\{|z|>\frac{|x-y|}{2}\right\}}\Big(f(x+z)-f(x)-\langle\nabla f(x), z\rangle \I_{\{|z|\le1\}}\Big)\,\\
  &\hskip75pt \times\frac{n(x,z)\wedge n(y,z)}{|z|^{d+\alpha(x)}\vee |z|^{d+\alpha(y)}}\,dz\\
  &\hskip13pt +\int_{\left\{|z|>\frac{|x-y|}{2}\right\}}\Big(f(x+z)-f(x)-\langle\nabla f(x),z\rangle \I_{\{|z|\le1\}}\Big)\\
  &\hskip75pt \times\bigg(\frac{n(x,z)}{|z|^{d+\alpha(x)}}-\frac{n(x,z)\wedge n(y,z)}{|z|^{d+\alpha(x)}\vee |z|^{d+\alpha(y)}}\bigg)\,dz \\
  &=\int_{\left\{|z|>\frac{|x-y|}{2}\right\}}\Big(f(x+z)-f(x)-\langle\nabla f(x), z \rangle \I_{\{|z|\le1\}}\Big)\frac{n(x,z)}{|z|^{d+\alpha(x)}}\,dz.
  \end{align*}
Combining the above two equalities finishes the proof.
\end{proof}

In the remainder of this section, we will construct a coupling
process associated with the coupling operator $ \widetilde{L}$
defined by \eqref{coup-1}. For any $x$, $y\in\R^d$ and $A\in
\mathscr{B}(\R^{2d})$, set
  \begin{align*}
  {\mu}(x,y,A)&:=\frac{1}{2}\int_{\left\{(z,\varphi_{x,y}(z))\in A,|z|\le\frac{|x-y|}{2}\right\}}\,
  \frac{\tilde{n}(x,y,z)}{|z|^{d+\alpha(x)}\vee |z|^{d+\alpha(y)}}\,dz\\
  &\hskip14pt +\frac{1}{2}\int_{\left\{(\varphi_{x,y}(z),z)\in
  A,|z|\le\frac{|x-y|}{2}\right\}}\,\frac{\tilde{n}(x,y,z)}{|z|^{d+\alpha(x)}\vee |z|^{d+\alpha(y)}}\,dz\\
  &\hskip14pt  +\int_{\left\{(z,0)\in
  A,|z|\le\frac{|x-y|}{2}\right\}}\,\left( \frac{n(x,z)}{|z|^{d+\alpha(x)}}-
  \frac{\tilde{n}(x,y,z)}{|z|^{d+\alpha(x)}\vee |z|^{d+\alpha(y)}}\right)\,dz\\
  &\hskip14pt  +\int_{\left\{(0,z)\in
  A,|z|\le\frac{|x-y|}{2}\right\}}\,\left( \frac{n(y,z)}{|z|^{d+\alpha(y)}}-
  \frac{\tilde{n}(x,y,z)}{|z|^{d+\alpha(x)}\vee |z|^{d+\alpha(y)}}\right)\,dz\\
  &\hskip14pt +\int_{\left\{(z,z)\in A,|z|>\frac{|x-y|}{2}\right\}}\,
  \frac{n(x,z)\wedge n(y,z)}{|z|^{d+\alpha(x)}\vee |z|^{d+\alpha(y)}}\,dz \\
  &\hskip14pt +\int_{\left\{(z,0)\in A,|z|>\frac{|x-y|}{2}\right\}}\,\bigg(\frac{n(x,z)}{|z|^{d+\alpha(x)}}-\frac{n(x,z)\wedge n(y,z)}{|z|^{d+\alpha(x)}\vee |z|^{d+\alpha(y)}}\bigg)\,dz\\
  &\hskip14pt +\int_{\left\{(0,z)\in A,|z|>\frac{|x-y|}{2}\right\}}\,\bigg(\frac{n(y,z)}{|z|^{d+\alpha(y)}}-\frac{n(x,z)\wedge n(y,z)}{|z|^{d+\alpha(x)}\vee |z|^{d+\alpha(y)}}\bigg)\,dz .
  \end{align*}
Then, by \eqref{coup-1}, for any $x$, $y\in\R^d$ and $f\in C^2_b(\R^{2d})$,  we have
  $$\aligned \widetilde{L} f(x,y) =&\int_{\R^{2d}}\Big[f\big((x,y)+(u_1,u_2)\big)-f(x,y)\\
  &\qquad\,\, -\big\langle \big(\nabla_xf(x,y),\nabla_yf(x,y)\big),(u_1,u_2)\big\rangle \I_{\{|u_1|\le 1, |u_2|\le 1\}}\Big]\,{\mu}(x,y,du_1,du_2).\endaligned$$

For any $h\in C_b(\R^{2d})$, by (A2),
  \begin{align*}
  &\int_{\R^{2d}} h(u)\frac{|u|^2}{1+|u|^2}\,{\mu}(x,y,du)\\
  &=\int_{\left\{|z|\le\frac{|x-y|}{2}\right\}}h(z,\varphi_{x,y}(z))\frac{|z|^2}{1+2|z|^2}
  \cdot \frac{\tilde{n}(x,y,z)}{|z|^{d+\alpha(x)}\vee |z|^{d+\alpha(y)}}\,dz\\
  &\hskip13pt +\int_{\left\{|z|\le\frac{|x-y|}{2}\right\}}h(\varphi_{x,y}(z),z)\frac{|z|^2}{1+2|z|^2}
  \cdot \frac{\tilde{n}(x,y,z)}{|z|^{d+\alpha(x)}\vee |z|^{d+\alpha(y)}}\,dz\\
  &\hskip13pt +\int_{\left\{|z|\le\frac{|x-y|}{2}\right\}}h((z,0))\frac{|z|^2}{1+|z|^2}
  \,\left( \frac{n(x,z)}{|z|^{d+\alpha(x)}}-
  \frac{\tilde{n}(x,y,z)}{|z|^{d+\alpha(x)}\vee |z|^{d+\alpha(y)}}\right)\,dz\\
  &\hskip13pt +\int_{\left\{|z|\le\frac{|x-y|}{2}\right\}}h((0,z))\frac{|z|^2}{1+|z|^2}
  \,\left( \frac{n(y,z)}{|z|^{d+\alpha(y)}}-
  \frac{\tilde{n}(x,y,z)}{|z|^{d+\alpha(x)}\vee |z|^{d+\alpha(y)}}\right)\,dz\\
  &\hskip13pt +2\int_{\left\{|z|>\frac{|x-y|}{2}\right\}}h(z,z)\frac{|z|^2}{1+2|z|^2}
  \cdot \frac{n(x,z)\wedge n(y,z)}{|z|^{d+\alpha(x)}\vee |z|^{d+\alpha(y)}}\,dz\\
  &\hskip13pt +\int_{\left\{|z|>\frac{|x-y|}{2}\right\}}h((z,0))\frac{|z|^2}{1+|z|^2}
  \,\bigg(\frac{n(x,z)}{|z|^{d+\alpha(x)}}-\frac{n(x,z)\wedge n(y,z)}{|z|^{d+\alpha(x)}\vee |z|^{d+\alpha(y)}}\bigg)\,dz\\
  &\hskip13pt +\int_{\left\{|z|>\frac{|x-y|}{2}\right\}}h((0,z))\frac{|z|^2}{1+|z|^2}
  \,\bigg(\frac{n(y,z)}{|z|^{d+\alpha(y)}}-\frac{n(x,z)\wedge n(y,z)}{|z|^{d+\alpha(x)}\vee |z|^{d+\alpha(y)}}\bigg)\,dz ,
  \end{align*}
which, along with the facts that $\alpha(x)$ and $n(x,z)$ are
continuous and strictly positive, implies that $(x,y)\mapsto\int
h(u)\frac{|u|^2}{1+|u|^2}\,{\mu}(x,y,du)$ is a continuous function
on $\R^{2d}$.

According to \cite[Theorem 2.2]{St}, there exist a probability space
$(\widetilde{\Omega}, \widetilde{\mathscr{F}},
(\widetilde{\mathscr{F}}_t)_{t\ge0}, \widetilde{\Pp})$ and an
$\bar{\R}^{2d}$-valued process $(\widetilde{X}_t)_{t\ge0}$ such that
$(\widetilde{X}_t)_{t\ge0}$ is
$(\widetilde{\mathscr{F}}_t)_{t\ge0}$-progressively measurable, and
for every $f\in C_b^2(\R^{2d})$,
  $$\bigg\{f(\widetilde{X}_t)-\int_0^{t\wedge e} \widetilde{L}f(\widetilde{X}_u)\,du, t\ge 0\bigg\}$$
is an $(\widetilde{\mathscr{F}}_t)_{t\ge0}$-local martingale, where
$e$ is the explosion time of $(\widetilde{X}_t)_{t\ge0}$, i.e.\
  $$e=\lim_{n\to\infty}\inf\Big\{t\ge0: |X_t'|+|X_t''|\ge n\Big\}.$$
Here, $(\widetilde{X}_t)_{t\ge0}:=(X_t',X_t'')_{t\ge0}$, and
$(X_t')_{t\ge0}$ and $(X_t'')_{t\ge0}$ are two stochastic processes
on $\R^{d}$. Since $\widetilde{L}$ is the coupling operator of $L$,
the generator of both marginal processes $(X_t')_{t\ge0}$ and
$(X_t'')_{t\ge0}$ is just the operator $L$, and hence both are
solutions to the martingale problem of $L$. In particular, by
(H1), the processes $(X_t')_{t\ge0}$ and $(X_t'')_{t\ge0}$ are
non-explosive, hence one has $e=\infty$ a.s.\, Therefore, the
coupling operator $\widetilde{L}$ generates a non-explosive process
$(\widetilde{X}_t)_{t\ge0}$.

Let $T$ be the coupling time of $(X_t')_{t\ge0}$ and $(X_t'')_{t\ge0}$, i.e.\
  $$T=\inf\{t\ge0: X_t'=X_t''\}.$$
Then $T$ is an $(\widetilde{\mathscr{F}}_t)_{t\geq 0}$-stopping time. Define
a new process $(Y'_t)_{t\ge0}$ as follows
  $$Y_t'=
  \begin{cases}
  X_t'', & t< T;\\
  X_t', & t\ge T.\\
  \end{cases}
  $$
Following the argument of \cite[Section 3.1]{PW}, we conclude that
$(X_t',Y_t')_{t\ge0}$ is also a non-explosive coupling process of
$(X_t)_{t\ge0}$ such that $X_t'=Y_t'$ for any $t\ge T$ and
the generator of $(X_t',Y_t')_{t\ge0}$ before the coupling time $T$
is just the coupling operator $\tilde{L}$ given by \eqref{coup-1}. On
the other hand, according to (H1) and \cite[Lemma 2.1]{CL}, we know that
for any $x$, $y\in\R^d$ and $f\in B_b(\R^d)$,
  $$P_t f(x)={\Ee^xf(X_t')}=\widetilde{{\Ee}}^{(x,y)}f(X_t')$$
and
  $$P_t f(y)={\Ee^yf(X_t'')}=\widetilde{{\Ee}}^{(x,y)}f(Y'_t).$$

\section{Proofs}\label{section2}

This section consists of two parts. We present in the first subsection the proofs of
Theorem \ref{th1.1} and Proposition \ref{pro-ex}, and then we prove Theorem \ref{f-thm} in
Subsection 3.2.

\subsection{Proof of Theorem $\ref{th1.1}$}
To prove Theorem \ref{th1.1}, we fix $\beta\in (0,\alpha_0\wedge1)$.
For any $n\ge1$, define an increasing function $f_{n}\in
C_b^2([0,\infty))$ such that $f_{n}(r)\leq r^\beta$ for all $r\geq
0$ and
  $$f_{n}(r)=
  \begin{cases}
  r^2, & 0\le r\le 1/(n+1);\\
  r^\beta, &1/n\le r\le 1.\\
  \end{cases}
  $$
The following estimate is critical for the proof of Theorem \ref{th1.1}.

\begin{proposition}\label{3-prop-0}
There exist three positive constants $C_1$, $C_2$ and $C_3$ such that for all $n\ge 1$ and for any $x$,
$y\in \R^d$ with $1/n\le |x-y|\le 1$,
  \begin{equation}\label{proof4-0}
  \begin{split}
  & \widetilde{L} f_{n}(|x-y|)\\
  &\le -\  C_1  |x-y|^{\beta-\alpha(x)\wedge\alpha(y)}+C_2\Big[1+|x-y|^{\beta-\alpha(x)\wedge\alpha(y)}A(x,y)\Big]\\
  &\hskip14pt + C_3|x-y|^{\beta-2}\int_{\left\{\frac{|x-y|}{2}<|z|<1\right\}}
  \langle
  x-y,z\rangle\bigg(\frac{n(y,z)}{|z|^{d+\alpha(y)}}-\frac{n(x,z)}{|z|^{d+\alpha(x)}}\bigg)\,dz,
  \end{split}
  \end{equation}
where
  \begin{align*} A(x,y)& :=
  |\alpha(x)-\alpha(y)|\Big(\log\frac{1}{|x-y|}\Big) |x-y|^{-|\alpha(x)-\alpha(y)|}\\
  &\hskip16pt +\Big(\sup_{|z|\leq 1} |n(x,z)-n(y,z)|
  +\sup_{z\in\R^d, |z_1-z_2|\le |x-y|}|n(z,z_1)-n(z,z_2)|\Big).
  \end{align*}
\end{proposition}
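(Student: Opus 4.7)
The plan is to set $F(x,y) := f_n(|x-y|)$, $r := |x-y|$ and $\mathbf{e} := (x-y)/r$, so that $\nabla_x F(x,y) = f_n'(r)\mathbf{e}$, $\nabla_y F(x,y) = -f_n'(r)\mathbf{e}$, and in particular $\nabla_x F + \nabla_y F \equiv 0$---an identity that will drive several cancellations. Then compute $\widetilde{L}F = \widetilde{L}_1 F + \widetilde{L}_2 F$ under the standing hypothesis $r \in [1/n, 1]$. Throughout write $\alpha_* := \alpha(x)\wedge\alpha(y)$; since $|z| \le 1$ implies $|z|^{d+\alpha(x)} \vee |z|^{d+\alpha(y)} = |z|^{d+\alpha_*}$, the denominators simplify on the small-jump region. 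For $\widetilde{L}_2 F$ the synchronous integrand vanishes identically, because $F(x+z,y+z) = F(x,y)$ and $\langle \nabla_x F + \nabla_y F, z\rangle = 0$. In the two asymmetric integrands, the gradient corrections combine via $-\langle \nabla_y F, z\rangle = \langle \nabla_x F, z\rangle = f_n'(r)\langle \mathbf{e}, z\rangle$ and the weight difference $n(x,z)/|z|^{d+\alpha(x)} - n(y,z)/|z|^{d+\alpha(y)}$; multiplying through by $rf_n'(r) = \beta r^\beta$ yields precisely
$$\beta r^{\beta-2}\int_{\{r/2 < |z| \le 1\}} \langle x-y, z\rangle \Big(\frac{n(y,z)}{|z|^{d+\alpha(y)}} - \frac{n(x,z)}{|z|^{d+\alpha(x)}}\Big)\,dz,$$
which is the third term of \eqref{proof4-0} (with $C_3 = \beta$). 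The residual function-value differences are bounded by $2\|f_n\|_\infty$; the tail $\{|z| > 1\}$ contributes an $O(1)$ constant (the ``$1$'' inside $C_2[\cdots]$), and the piece $\{r/2 < |z| \le 1\}$ contributes an $O(r^{\beta-\alpha_*}A(x,y))$ term by the weight decomposition described below.

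For $\widetilde{L}_1 F$, properties (A2)--(A3) of $\varphi_{x,y}$ give $\langle \mathbf{e}, \varphi_{x,y}(z)\rangle = -\langle \mathbf{e}, z\rangle$, so $|x-y+z-\varphi_{x,y}(z)| = r + 2\langle \mathbf{e}, z\rangle$ and $|x-y+\varphi_{x,y}(z)-z| = r - 2\langle \mathbf{e}, z\rangle$, both nonnegative for $|z| \le r/2$. Averaging the two reflection integrands with their common prefactor $1/2$ produces the symmetric second difference
$$\tfrac{1}{2}\bigl[f_n(r + 2\langle \mathbf{e}, z\rangle) + f_n(r - 2\langle \mathbf{e}, z\rangle)\bigr] - f_n(r),$$
while all accompanying gradient corrections cancel exactly via $\nabla_x F + \nabla_y F = 0$. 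Since $f_n(t) = t^\beta$ with $f_n''(t) = \beta(\beta-1)t^{\beta-2} < 0$ on $[1/n, 1]$, a Taylor estimate bounds this expression above by $-c \langle \mathbf{e}, z\rangle^2 r^{\beta-2}$ on the bulk range; any correction from the transition region of $f_n$ near $1/n$ produces only an $O(1)$ error, absorbed into $C_2$. Using $\tilde n(x,y,z) \ge c_1/2$ (valid on the bulk of the small-jump region once $r$ is small enough, by (H3)) together with the radial computation $\int_{|z|\le r/2} \langle \mathbf{e}, z\rangle^2 |z|^{-d-\alpha_*}\,dz \sim r^{2-\alpha_*}$ yields the principal contracting term $-C_1 r^{\beta-\alpha_*}$.

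It remains to control the two asymmetric small-jump integrands of $\widetilde{L}_1$ and the function-value piece of $\widetilde{L}_2$ on $\{r/2 < |z| \le 1\}$. The pointwise Taylor bound $|f_n(|x\pm z-y|) - f_n(r) \mp f_n'(r)\langle \mathbf{e}, z\rangle| \le C r^{\beta-2}|z|^2$ reduces matters to estimating the asymmetric weights. The central step is to decompose, e.g.,
$$\frac{n(x,z)}{|z|^{d+\alpha(x)}} - \frac{\tilde n(x,y,z)}{|z|^{d+\alpha_*}} = \frac{n(x,z) - \tilde n(x,y,z)}{|z|^{d+\alpha(x)}} + \frac{\tilde n(x,y,z)}{|z|^{d+\alpha(x)}}\bigl(1 - |z|^{\alpha(x) - \alpha_*}\bigr);$$
the first summand is controlled by the $n$-oscillation terms in $A(x,y)$, and the second by the sharp estimate $|1 - |z|^{\alpha(x)-\alpha(y)}| \le |\alpha(x)-\alpha(y)||\log|z|| \cdot |z|^{-|\alpha(x)-\alpha(y)|}$, which (evaluated at $|z| \ge r/2$) produces precisely the factor $|x-y|^{-|\alpha(x)-\alpha(y)|}\log(1/|x-y|)|\alpha(x)-\alpha(y)|$ appearing in the first summand of $A(x,y)$. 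Combined with $\int_{|z|\le r/2} |z|^{2-d-\alpha_*}\,dz \sim r^{2-\alpha_*}$, this delivers the $r^{\beta-\alpha_*}A(x,y)$ contribution, completing the bound.

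The main technical obstacle is exactly this last bookkeeping: the borderline-divergent exponential estimate on $|1 - |z|^{\alpha(x)-\alpha(y)}|$ is what forces the $|x-y|^{-|\alpha(x)-\alpha(y)|}$ correction inside $A(x,y)$, and one must verify (via assumption (H2)) that this factor remains bounded so the asymmetric errors do not swamp the contraction $-C_1 r^{\beta-\alpha_*}$. A secondary subtlety is the Taylor analysis of $f_n$ near the transition point $t = 1/n$, where its pure power-law form breaks down; this contributes only an $O(1)$ error, uniformly in $n$, which is absorbed into $C_2$.
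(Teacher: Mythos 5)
Your overall strategy tracks the paper closely: you split $\widetilde{L}f_n$ into $\widetilde{L}_1f_n+\widetilde{L}_2f_n$, observe that the synchronous integrand of $\widetilde{L}_2$ vanishes since $F(x+z,y+z)=F(x,y)$ and $\nabla_xF+\nabla_yF\equiv 0$, extract the gradient corrections of the asymmetric pieces to produce exactly the third term of \eqref{proof4-0} with $C_3=\beta$, and estimate the reflection term $\widetilde{L}_{1,1}$ by a concavity argument in $t^\beta$. All of this matches the paper. However, there are two places where the proposal as written does not establish the claimed estimate.

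The decisive gap is in the residual function-value piece of $\widetilde{L}_2$ on $\{|x-y|/2<|z|\le 1\}$. You propose to bound $f_n(|x\pm z-y|)-f_n(|x-y|)$ either by $2\|f_n\|_\infty$ or by the one-sided Taylor bound $C|x-y|^{\beta-2}|z|^2$. Neither gives the right power of $r:=|x-y|$. With the crude $O(1)$ bound, integrating $\int_{r/2<|z|\le 1}|z|^{-d-\alpha(x)}\log\tfrac{1}{|z|}\,dz\asymp r^{-\alpha(x)}\log\tfrac1r$ yields a contribution of order $r^{-\alpha(x)}\log\tfrac1r\,|\alpha(x)-\alpha(y)|$, and $\int_{r/2<|z|\le 1}|z|^{-d-\alpha_*}\,dz\asymp r^{-\alpha_*}$ yields $r^{-\alpha_*}\cdot(\text{$n$-oscillation})$; both strictly dominate the target $r^{\beta-\alpha_*}A(x,y)$ for small $r$ since $\beta>0$. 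With the Taylor bound the situation is no better: $\int_{r/2<|z|\le 1}r^{\beta-2}|z|^{2-d-\alpha(x)}\log\tfrac1{|z|}\,dz$ is bounded by a constant (because $2-\alpha(x)>0$), producing a contribution of order $r^{\beta-2}|\alpha(x)-\alpha(y)|$, and $r^{\beta-2}$ exceeds $r^{\beta-\alpha^*}\log\tfrac1r$ as $r\to0$ because $\alpha^*<2$. In either case, when one feeds these weaker bounds into Corollary \ref{cor-1}, condition \eqref{h2} no longer suffices to make the positive remainders negligible against the contracting $-C_1r^{\beta-\alpha_*}$; one would instead need something like $|\alpha(x)-\alpha(y)|=o(r^{2-\alpha_*})$, which is a much stronger hypothesis. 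The paper avoids this via the subadditivity $(a+b)^\beta\le a^\beta+b^\beta$: since $f_n\le(\cdot)^\beta$ and $f_n(r)=r^\beta$ on $[1/n,1]$, one has $f_n(|x\pm z-y|)-f_n(r)\le(|x-y|+|z|)^\beta-|x-y|^\beta\le|z|^\beta$. The factor $|z|^\beta$, not $|z|^2$ or $O(1)$, is what makes $\int_{r/2<|z|\le 1}|z|^{\beta-d-\alpha(x)}\log\tfrac1{|z|}\,dz\asymp r^{\beta-\alpha(x)}\log\tfrac1r$, giving precisely the factor $r^{\beta-\alpha_*}r^{-|\alpha(x)-\alpha(y)|}\log\tfrac1r$ that appears in $A(x,y)$. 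Without this ingredient the estimate \eqref{proof4-0} cannot be recovered.

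A secondary, repairable imprecision concerns the reflection term $\widetilde{L}_{1,1}$. You bound the symmetric second difference $\tfrac12[f_n(r+2\langle\mathbf{e},z\rangle)+f_n(r-2\langle\mathbf{e},z\rangle)]-f_n(r)$ by Taylor-expanding $f_n$ and appealing to $f_n''(t)=\beta(\beta-1)t^{\beta-2}<0$, then claim the transition region near $1/n$ only adds an $O(1)$ error absorbed in $C_2$. But the arguments $r\pm 2\langle\mathbf{e},z\rangle$ range over $[0,2r]$ and may fall below $1/n$, where $f_n$ is not $t^\beta$; in $[0,1/(n+1)]$ one has $f_n(t)=t^2$ with $f_n''=2>0$, and in the transition interval $[1/(n+1),1/n]$ the second derivative $f_n''$ is unbounded in $n$ (it must climb from $\sim 2/n$ to $\sim\beta n^{1-\beta}$ over a window of length $\sim n^{-2}$). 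So the ``$O(1)$ error uniformly in $n$'' claim is not justified as stated. The clean (and in fact error-free) argument is the one in the paper: never expand $f_n$, but instead use $f_n(t)\le t^\beta$ globally and $f_n(r)=r^\beta$ for $r\in[1/n,1]$, so that the second difference is bounded above by $\tfrac12[(r+s)^\beta+(r-s)^\beta]-r^\beta$ with $s=2\langle\mathbf{e},z\rangle$, and then apply the elementary inequality $(1+u)^\beta+(1-u)^\beta\le 2+\beta(\beta-1)u^2$ for $|u|\le 1$. This sidesteps the transition region entirely.
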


\begin{proof} By the definition \eqref{coup-1} of the coupling operator $\widetilde L$,
we shall estimate the two terms $\widetilde{L}_1f_{n}(|x-y|)$ and $\widetilde{L}_2f_{n}(|x-y|)$
separately.

(1)  First, for any $x$, $y\in \R^d$ with $1/n\le |x-y|\le 1$, by
(A3), \begin{equation}\label{f-1}\<\nabla_xf_n(|x-y|),
z+\varphi_{x,y}(z)\>=0\quad \mbox{and}\quad
  \<\nabla_yf_n(|x-y|), z+\varphi_{x,y}(z)\>=0.\end{equation}
We have for any $x$, $y\in \R^d$ with $1/n\le |x-y|\le 1$,
  \begin{align*}
  & \widetilde{L}_1f_{n}(|x-y|)\\
  &\le \frac{1}{2}\bigg[\int_{\left\{|z|\le\frac{|x-y|}{2}\right\}}\! \bigg(\Big|(x-y)+\frac{2\langle x-y,z\rangle }{|x-y|^2}(x-y)\Big|^\beta \\
  &\hskip90pt + \Big|(x-y)-\frac{2\langle x-y,z\rangle }{|x-y|^2}(x-y)\Big|^\beta \!
  -2|x-y|^\beta\bigg)\frac{\tilde{n}(x,y,z)}{|z|^{d+\alpha(x)\wedge\alpha(y)}}\,dz\bigg]\\
  &\quad +\bigg[\int_{\left\{|z|\le\frac{|x-y|}{2}\right\}} \Big(|x-y+z|^\beta-|x-y|^\beta-\beta|x-y|^{\beta-2}\langle x-y,z\rangle\Big)\\
  &\hskip90pt \times\bigg(\frac{n(x,z)}{|z|^{d+\alpha(x)}}-\frac{\tilde{n}(x,y,z)}{|z|^{d+\alpha(x)\wedge\alpha(y)}}\bigg)\,dz\\
  &\hskip30pt + \int_{\left\{|z|\le\frac{|x-y|}{2}\right\}} \Big(|x-y-z|^\beta-|x-y|^\beta+\beta|x-y|^{\beta-2}\langle x-y,z\rangle\Big)\\
  &\hskip95pt \times\bigg(\frac{n(y,z)}{|z|^{d+\alpha(y)}}
  -\frac{\tilde{n}(x,y,z)}{|z|^{d+\alpha(x)\wedge\alpha(y)}}\bigg)\,dz\bigg]\\
  &=: \widetilde{L}_{1,1}f_{n}(|x-y|)+ \widetilde{L}_{1,2}f_{n}(|x-y|).
  \end{align*}

On the one hand, using the facts that
  $$(1+s)^\beta+(1-s)^\beta\le 2+\beta(\beta-1)s^2, \quad 0\le s<1, \beta\in(0,1)$$
and $n(x,z)\ge c_1$ for all $x$, $z\in\R^d$,
we deduce that there exists a constant $c_{1,1}>0$ such that for any
$x$, $y\in \R^d$ with $1/n\le |x-y|\le 1$,
 \begin{align*}
  \widetilde{L}_{1,1}f_{n}(|x-y|)
  &\le {2}{\beta(\beta-1)}|x-y|^{\beta-2}\int_{\left\{|z|\le\frac{|x-y|}{2}\right\}}
  \frac{|\langle x-y,z\rangle|^2}{|x-y|^2}\cdot \frac{\tilde{n}(x,y,z)}{|z|^{d+\alpha(x)\wedge\alpha(y)}}\,dz\\
  &\le {2}{c_1\beta(\beta-1)}|x-y|^{\beta-2}\int_{\left\{|z|\le\frac{|x-y|}{2}\right\}}
  \frac{|\langle x-y,z\rangle|^2}{|x-y|^2}\cdot \frac{1}{|z|^{d+\alpha(x)\wedge\alpha(y)}}\,dz\\
  &= {2}{c_1\beta(\beta-1)}|x-y|^{\beta-2}\int_{\left\{|z|\le\frac{|x-y|}{2}\right\}}
  \frac{|z_1|^2}{|z|^{d+\alpha(x)\wedge\alpha(y)}}\,dz\\
  &\le -{c_{1,1}} |x-y|^{\beta-\alpha(x)\wedge\alpha(y)},
  \end{align*}
where in the equality above we have used the rotational invariance property of the kernel
$\frac{1}{|z|^{d+\alpha(x)\wedge\alpha(y)}}\,dz$, and in the last
inequality we have used that ${\alpha(x)\wedge\alpha(y)}\ge
\alpha_0>0$ for all $x$, $y\in\R^d$.

On the other hand, applying the inequality
  $$b^\beta- a^\beta\le \beta a^{\beta-1}(b-a),\quad a,b>0, \beta\in(0,1),$$
we know that for all $x$, $y$ and $z\in\R^d$,
  $$|x-y+z|^\beta-|x-y|^\beta\le \beta|x-y|^{\beta-1}\big(|x-y+z|-|x-y|\big).$$
Moreover, for all $x$, $y$ and $z\in\R^d$,
 \begin{equation}\label{f-0}
  \begin{split}
  &|x-y|\cdot|x+z-y|-|x-y|^2-\langle x-y,z\rangle\\
  &= \frac{1}{2}\Big(2|x-y|\cdot |x-y+z|-2|x-y|^2-|x+z-y|^2+|z|^2+|x-y|^2\Big)\\
  &=  \frac{1}{2}\Big(|z|^2-\big(|x-y-z|-|x-y|\big)^2\Big)
  \le \frac{|z|^2}{2}.   \end{split}\end{equation}
Thus for any $x$, $y\in \R^d$ with $1/n\le |x-y|\le 1$, we arrive at
  \begin{align*}
  \widetilde{L}_{1,2}f_{n}(|x-y|)
  &\le \frac{\beta}{2}|x-y|^{\beta-2}\int_{\left\{|z|\le\frac{|x-y|}{2}\right\}}
  |z|^2\bigg(\frac{n(x,z)}{|z|^{d+\alpha(x)}}-\frac{\tilde{n}(x,y,z)}{|z|^{d+\alpha(x)\wedge\alpha(y)}}\bigg)\,dz\\
  &\quad+ \frac{\beta}{2}|x-y|^{\beta-2}
 \int_{\left\{|z|\le\frac{|x-y|}{2}\right\}} |z|^2\bigg(\frac{n(y,z)}{|z|^{d+\alpha(y)}}
  -\frac{\tilde{n}(x,y,z)}{|z|^{d+\alpha(x)\wedge\alpha(y)}}\bigg)\,dz. \end{align*}

Furthermore, for all $x$, $y$ and $z\in\R^d$,
 \begin{equation}\label{f-2}
  \begin{split}
  &\frac{n(x,z)}{|z|^{d+\alpha(x)}}-\frac{\tilde{n}(x,y,z)}{|z|^{d+\alpha(x)\wedge\alpha(y)}}\\
  &=\bigg(\frac{n(x,z)}{|z|^{d+\alpha(x)}}-\frac{n(x,z)}{|z|^{d+\alpha(x)\wedge\alpha(y)}}\bigg)
  +\bigg(\frac{n(x,z)}{|z|^{d+\alpha(x)\wedge\alpha(y)}}-\frac{\tilde{n}(x,y,z)}{|z|^{d+\alpha(x)\wedge\alpha(y)}} \bigg).
  \end{split}\end{equation}
For the first term, we note that $n(x,z)\le c_2$ for all $x$,
$z\in\R^d$, and by the mean value theorem, for all $z\in\R^d$ with
$|z|\le 1$,
\begin{equation}\label{eee}
  \begin{split}
  &\frac{1}{|z|^{d+\alpha(x)}}-\frac{1}{|z|^{d+\alpha(x)\wedge\alpha(y)}}\\
  &=\frac{1}{|z|^{d+\alpha(x)\wedge\alpha(y)}}
  \bigg[ \Big(\frac{1}{|z|}\Big)^{\alpha(x)-\alpha(x)\wedge \alpha(y)}-1\bigg] \\
  &\le \frac{1}{|z|^{d+\alpha(x)\wedge\alpha(y)}} \Big(\frac{1}{|z|}\Big)^{\alpha(x)-\alpha(x)\wedge \alpha(y)}
  \Big(\log \frac{1}{|z|}\Big)
  \big[\alpha(x)-\alpha(x)\wedge \alpha(y)\big]\\
  &=\frac{1}{|z|^{d+\alpha(x)}} \Big(\log \frac{1}{|z|}\Big)
  \big|\alpha(x)-\alpha(y)\big|.
  \end{split}\end{equation}
The estimate of  the second term follows from the definition of the function $\tilde{n}(x,y,z)$:
   \begin{equation}\label{f-3}
  \begin{split}
  &\frac{n(x,z)}{|z|^{d+\alpha(x)\wedge\alpha(y)}}-\frac{\tilde{n}(x,y,z)}{|z|^{d+\alpha(x)\wedge\alpha(y)}}\\
  &\le \frac{1}{|z|^{d+\alpha(x)\wedge\alpha(y)}}\bigg[\sup_{ |z|\le 1} |n(x,z)-n(y,z)|
  \\
  &\qquad\qquad\qquad\qquad+\sup_{z\in\R^d, |z_1-z_2|\le |x-y|}|n(z,z_1)-n(z,z_2)|\bigg].
  \end{split}\end{equation}

Hence, we get that there exists a constant $c_{1,2}>0$ such that for any $x$, $y\in \R^d$
with $1/n\le |x-y|\le 1$,
  \begin{align*}
  & \widetilde{L}_{1,2}f_{n}(|x-y|)\\
  &\le \frac{c_2\beta}{2}|x-y|^{\beta-2}|\alpha(x)-\alpha(y)|\int_{\left\{|z|\le\frac{|x-y|}{2}\right\}}
  \frac{ |z|^2}{|z|^{d+\alpha(x)}}\Big(\log \frac{1}{|z|}\Big)\,dz\\
  &\quad+\frac{c_2\beta}{2}|x-y|^{\beta-2}|\alpha(x)-\alpha(y)|\int_{\left\{|z|\le\frac{|x-y|}{2}\right\}}
  \frac{ |z|^2}{|z|^{d+\alpha(y)}}\Big(\log \frac{1}{|z|}\Big)\,dz\\
  &\quad+ {\beta}|x-y|^{\beta-2}\bigg( \sup_{|z|\le 1} |n(x,z)-n(y,z)|
  +\sup_{z\in\R^d,|z_1-z_2|\le |x-y|}|n(z,z_1)-n(z,z_2)|\bigg)\\
  &\hskip86pt \times \int_{\left\{|z|\le\frac{|x-y|}{2}\right\}}
  \frac{|z|^2}{|z|^{d+\alpha(x)\wedge\alpha(y)}}\,dz\\
  &\le c_{1,2} |x-y|^{\beta-\alpha(x)\wedge\alpha(y)}\bigg[|\alpha(x)-\alpha(y)|\Big(\log\frac{1}{|x-y|}\Big) |x-y|^{-|\alpha(x)-\alpha(y)|}\\
  &\qquad\quad +\bigg(\sup_{|z|\le 1}
  |n(x,z)-n(y,z)|+\sup_{z\in\R^d, |z_1-z_2|\le |x-y|}|n(z,z_1)-n(z,z_2)|\bigg)\bigg], \end{align*}
where in the last inequality we have used the fact that
$\alpha(x)\wedge\alpha(y)\ge \alpha_0$ for all $x$, $y\in\R^d.$

(2) Secondly, for any $x$, $y\in \R^d$ with $1/n\le |x-y|\le 1$,
  \begin{align*}
  & \widetilde{L}_2f_{n}(|x-y|)\\
  &\le \int_{\left\{|z|>\frac{|x-y|}{2}\right\}}
  \Big(|x-y+z|^\beta-|x-y|^\beta-\beta|x-y|^{\beta-2} \langle x-y,z\rangle \I_{\{|z|\le 1\}}\Big)\\
  &\hskip75pt \times\bigg(\frac{n(x,z)}{|z|^{d+\alpha(x)}}-\frac{n(x,z)\wedge n(y,z)}{|z|^{d+\alpha(x)}\vee |z|^{d+\alpha(y)}}\bigg)\,dz\\
  &\quad+ \int_{\left\{|z|>\frac{|x-y|}{2}\right\}}
  \Big(|x-y-z|^\beta-|x-y|^\beta+\beta|x-y|^{\beta-2} \langle x-y,z\rangle\I_{\{|z|\le 1\}}\Big)\\
  &\hskip75pt \times\bigg(\frac{n(y,z)}{|z|^{d+\alpha(y)}}-\frac{n(x,z)\wedge n(y,z)}{|z|^{d+\alpha(x)}\vee |z|^{d+\alpha(y)}}\bigg)\,dz\\
   &\le \int_{\left\{|z|>\frac{|x-y|}{2}\right\}}
  |z|^\beta\bigg(\frac{n(x,z)}{|z|^{d+\alpha(x)}}-\frac{n(x,z)\wedge n(y,z)}{|z|^{d+\alpha(x)}\vee |z|^{d+\alpha(y)}}\bigg)\,dz\\
  &\quad+ \int_{\left\{|z|>\frac{|x-y|}{2}\right\}}
  |z|^\beta\bigg(\frac{n(y,z)}{|z|^{d+\alpha(y)}}-\frac{n(x,z)\wedge n(y,z)}{|z|^{d+\alpha(x)}\vee |z|^{d+\alpha(y)}}\bigg)\,dz\\
  &\quad-\beta|x-y|^{\beta-2}\int_{\left\{\frac{|x-y|}{2}<|z|<1\right\}}\langle x-y,z\rangle\bigg(\frac{n(x,z)}{|z|^{d+\alpha(x)}}-\frac{n(y,z)}{|z|^{d+\alpha(y)}}\bigg)\,dz,
  \end{align*}
where in the second inequality we have used the inequality that
$$(a+b)^\beta\le a^\beta+b^\beta,\quad a, b>0, \beta\in(0,1).$$

On the one hand, it is easy to see that
\begin{align*}
  &\int_{\left\{|z|\ge 1\right\}}
  |z|^\beta\bigg(\frac{n(x,z)}{|z|^{d+\alpha(x)}}-\frac{n(x,z)\wedge n(y,z)}{|z|^{d+\alpha(x)}\vee |z|^{d+\alpha(y)}}\bigg)\,dz\\
  &\quad+ \int_{\left\{|z|>1\right\}}
 |z|^\beta\bigg(\frac{n(y,z)}{|z|^{d+\alpha(y)}}-\frac{n(x,z)\wedge n(y,z)}{|z|^{d+\alpha(x)}\vee |z|^{d+\alpha(y)}}\bigg)\,dz\\
 & \le c_2\int_{\left\{|z|\ge 1\right\}}
 \bigg(\frac{1}{|z|^{d+\alpha(x)-\beta}}+\frac{1}{|z|^{d+\alpha(y)-\beta}}\bigg)\,dz\\
 &\le 2 c_2\int_{\left\{|z|\ge 1\right\}}
 \frac{1}{|z|^{d+\alpha_0-\beta}}\,dz=: c_{2,1}<\infty.
  \end{align*}
On the other hand, following the argument for the estimate of $
\widetilde{L}_{1,2}f_{n}(|x-y|)$, we can find a constant $c_{2,2}>0$
such that for any $x$, $y\in \R^d$ with $1/n\le |x-y|\le 1$,
  \begin{align*}
  &\int_{\left\{\frac{|x-y|}{2}<|z|\le1\right\}}
  |z|^\beta\bigg(\frac{n(x,z)}{|z|^{d+\alpha(x)}}-\frac{n(x,z)\wedge n(y,z)}{|z|^{d+\alpha(x)}\vee |z|^{d+\alpha(y)}}\bigg)\,dz\\
  &\quad+ \int_{\left\{\frac{|x-y|}{2}<|z|\le1\right\}}
 |z|^\beta\bigg(\frac{n(y,z)}{|z|^{d+\alpha(y)}}-\frac{n(x,z)\wedge n(y,z)}{|z|^{d+\alpha(x)}\vee |z|^{d+\alpha(y)}}\bigg)\,dz\\
  &\le c_{2,2} |x-y|^{\beta-\alpha(x)\wedge\alpha(y)}\bigg[|\alpha(x)-\alpha(y)|\Big(\log\frac{1}{|x-y|}\Big) |x-y|^{-|\alpha(x)-\alpha(y)|}\\
  &\qquad\qquad\qquad\qquad \qquad+\Big(\sup_{|z|\le 1} |n(x,z)-n(y,z)|\Big)\bigg]. \end{align*}
Thus, for any $x$, $y\in \R^d$ with $1/n\le |x-y|\le 1$,
  \begin{align*}
  & \widetilde{L}_2f_{n}(|x-y|)\\
  &\le c_{2,1}+ c_{2,2} |x-y|^{\beta-\alpha(x)\wedge\alpha(y)}\bigg[|\alpha(x)-\alpha(y)|\Big(\log\frac{1}{|x-y|}\Big) |x-y|^{-|\alpha(x)-\alpha(y)|}\\
  &\qquad\qquad\qquad\qquad \qquad\qquad\quad+\Big(\sup_{|z|\le 1} |n(x,z)-n(y,z)|\Big)\bigg]\\
 &\quad-\beta|x-y|^{\beta-2}\int_{\left\{\frac{|x-y|}{2}<|z|<1\right\}}\langle x-y,z\rangle\bigg(\frac{n(x,z)}{|z|^{d+\alpha(x)}}-\frac{n(y,z)}{|z|^{d+\alpha(y)}}\bigg)\,dz
  \end{align*}

(3) The required assertion \eqref{proof4-0} immediately follows from  the two
above estimates on $\widetilde{L}_1f_{n}(|x-y|)$ and
$\widetilde{L}_2f_{n}(|x-y|)$.
\end{proof}

\begin{corollary}\label{cor-1}
Under assumptions ${\rm (H2)}$, ${\rm (H3)}$ and ${\rm (H4)}$, there
exist two constants $\varepsilon_0:=\varepsilon_0(\beta,\alpha(x),
n(x,z))\in (0,1)$ and $C_0:=C_0(\beta,\alpha(x), n(x,z))>0$, such
that for all $n\ge1$, $\varepsilon\in(0,\varepsilon_0]$ and any $x$,
$y\in \R^d$ with $1/n\le |x-y|< \varepsilon$,
  \begin{equation}\label{proof4}
  \begin{split}
 \widetilde{L} f_{n}(|x-y|)\le -{C_0\, \varepsilon^{\beta-\alpha_0}}=:A_{\beta,\alpha}<0.
  \end{split}
  \end{equation}

\end{corollary}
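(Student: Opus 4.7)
The plan is to start from the pointwise upper bound \eqref{proof4-0} obtained in Proposition \ref{3-prop-0} and show that, when $|x-y|$ is sufficiently small, the three ``error'' terms there are dominated by $\tfrac{1}{2} C_1 |x-y|^{\beta-\alpha(x)\wedge\alpha(y)}$, leaving the strictly negative leading term as the effective bound. Once this is done, the corollary follows from the elementary monotonicity
\[
|x-y|^{\beta-\alpha(x)\wedge\alpha(y)}\ \ge\ |x-y|^{\beta-\alpha_0}\ \ge\ \varepsilon^{\beta-\alpha_0},
\qquad 0<|x-y|<\varepsilon<1,
\]
which uses only $\alpha(x)\wedge\alpha(y)\ge \alpha_0$ and $\beta-\alpha_0<0$. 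Thus the target constant will simply be $C_0=C_1/2$.

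To make the three error terms small relative to $|x-y|^{\beta-\alpha(x)\wedge\alpha(y)}$, I would divide \eqref{proof4-0} through by this quantity and examine each summand. First, the constant contribution $C_2$ yields $C_2\,|x-y|^{\alpha(x)\wedge\alpha(y)-\beta}$, and since $\alpha(x)\wedge\alpha(y)-\beta\ge\alpha_0-\beta>0$, this quantity is bounded above by $C_2\,|x-y|^{\alpha_0-\beta}\to 0$ as $|x-y|\to 0$. Second, the term involving $A(x,y)$ reduces to $C_2\, A(x,y)$, so it suffices to show $A(x,y)\to 0$. For the $\alpha$-part of $A(x,y)$, I would rewrite
\[
|\alpha(x)-\alpha(y)|\,\Big(\log\tfrac{1}{|x-y|}\Big)\,|x-y|^{-|\alpha(x)-\alpha(y)|}
= \Big[|\alpha(x)-\alpha(y)|\log\tfrac{1}{|x-y|}\Big]\cdot e^{|\alpha(x)-\alpha(y)|\log(1/|x-y|)},
\]
and invoke \eqref{h2} of (H2), which forces the bracket to vanish and hence the exponential factor to tend to $1$. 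For the two suprema involving $n$, \eqref{h3} of (H3) guarantees they vanish as $|x-y|\to 0$. Third, the remaining integral term is (up to the constant $C_3/\beta$) exactly the quantity appearing in assumption (H4), so \eqref{h4} shows it is $o(1)$ uniformly on the diagonal.

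Putting these three $o(1)$ estimates together, there is $\varepsilon_0\in(0,1)$ depending only on $\beta$, $\alpha(\cdot)$, $n(\cdot,\cdot)$ and the constants $C_1,C_2,C_3$ such that for every $x,y$ with $|x-y|<\varepsilon_0$ the sum of the three error contributions is at most $\tfrac{1}{2}C_1 |x-y|^{\beta-\alpha(x)\wedge\alpha(y)}$. Feeding this back into \eqref{proof4-0} gives
\[
\widetilde L f_n(|x-y|)\ \le\ -\tfrac{1}{2}C_1\,|x-y|^{\beta-\alpha(x)\wedge\alpha(y)},
\]
and then the monotonicity $|x-y|^{\beta-\alpha(x)\wedge\alpha(y)}\ge\varepsilon^{\beta-\alpha_0}$ on $\{1/n\le|x-y|<\varepsilon\le\varepsilon_0\}$ completes the proof with $C_0:=C_1/2$.

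The main technical obstacle, in my view, is the $\alpha$-modulus piece of $A(x,y)$: one has to verify that the logarithmic uniform continuity condition \eqref{h2} simultaneously kills the factor $|\alpha(x)-\alpha(y)|\log(1/|x-y|)$ and keeps the potentially singular factor $|x-y|^{-|\alpha(x)-\alpha(y)|}$ bounded (in fact $\to 1$); everything else is either a direct appeal to the uniform-continuity hypotheses (H3), (H4) or an elementary power-of-$\varepsilon$ comparison. No uniformity in $n$ is needed since the bounds above are independent of $n$ as long as $|x-y|\ge 1/n$.
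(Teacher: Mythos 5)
Your proposal is correct and follows essentially the same route as the paper: it feeds the bound from Proposition \ref{3-prop-0} into the three limits furnished by (H2), (H3), (H4) to absorb the error terms into half of the negative leading term, then applies the power comparison $|x-y|^{\beta-\alpha(x)\wedge\alpha(y)}\ge|x-y|^{\beta-\alpha_0}\ge\varepsilon^{\beta-\alpha_0}$. The only discrepancy is a cosmetic one — your stray factor of ``$C_3/\beta$'' should just be ``$C_3$'' — and you are slightly more careful than the paper's text in explicitly citing (H4) for the third error term.
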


\begin{proof} By \eqref{h2},
\begin{equation}\label{ppp}\lim_{|x-y|\to 0} |\alpha(x)-\alpha(y)|\Big(\log\frac{1}{|x-y|}\Big)
|x-y|^{-|\alpha(x)-\alpha(y)|}=0.\end{equation} Due to \eqref{h3},
$$\lim_{|x-y|\to0} \Big(\sup_{|z|\le 1} |n(x,z)-n(y,z)|+\sup_{z\in\R^d, |z_1-z_2|\le
|x-y|}|n(z,z_1)-n(z,z_2)|\Big)=0.$$ Noticing that for all $x$, $y\in\R^d$ with $|x-y|\le 1$,
$$|x-y|^{\beta-\alpha(x)\wedge\alpha(y)}\ge
|x-y|^{\beta-\alpha_0},$$ we obtain from \eqref{h3} and
\eqref{proof4-0} that there is a constant
$\varepsilon_0:=\varepsilon_0(\beta,\alpha(x), n(x,z))\in (0,1)$
such that for all $\varepsilon\in(0,\varepsilon_0]$ and any $x$,
$y\in \R^d$ with $1/n\le |x-y|\le \varepsilon$,
  $$
  \widetilde{L} f_{n}(|x-y|)\le -\frac{C_1}{2}  |x-y|^{\beta-\alpha(x)\wedge\alpha(y)}\le -\frac{C_1\varepsilon^{\beta-\alpha_0}}{2}.$$
This proves the desired assertion.\end{proof}

Now we are ready to present the

\begin{proof}[Proof of Theorem $\ref{th1.1}$]
We will use the coupling process $(X'_t,Y'_t)_{t\ge0}$ constructed
in Section \ref{section22}. Denote by $\widetilde{\Pp}^{(x,y)}$ and
$\widetilde{\Ee}^{(x,y)}$ the distribution and the expectation of
$(X'_t,Y'_t)_{t\ge0}$ starting from $(x,y)$, respectively. Recall
that $\varepsilon_0>0$ is given in Corollary \ref{cor-1}. For any
$n\ge1$ and $\varepsilon\in(0,\varepsilon_0]$, we set
  $$\aligned S_\varepsilon&:=\inf\{t\ge0: |X'_t-Y'_t|>\varepsilon\},\\
  T_n&:=\inf\{t\ge0: |X'_t-Y'_t|\le 1/n\},\\
  T_{n,\varepsilon}&:=T_n\wedge S_\varepsilon.\endaligned$$
Furthermore, we will still use the coupling time defined by
  $$T:=\inf\{t\ge0: X'_t=Y'_t\}.$$
Note that $T_n\uparrow T$ as $n\uparrow\infty.$

For any $x,$ $y\in\R^d$ with $0<|x-y|<\varepsilon\le \varepsilon_0$,
we take $n$ large enough such that $|x-y|>1/n$. Then, by
\eqref{proof4},
  $$\aligned
  0&\le\widetilde{\Ee}^{(x,y)}f_{n}\big(|X'_{t\wedge T_{n,\varepsilon}}-Y'_{t\wedge T_{n,\varepsilon}}|\big)\\
  &=f_{n}(|x-y|)+\widetilde{\Ee}^{(x,y)}\bigg(\int_0^{t\wedge T_{n,\varepsilon}} \widetilde{L} f_{n}\big(|X'_{s}-Y'_{s}|\big)\,ds\bigg)\\
  &\le f_{n}(|x-y|)+A_{\beta,\alpha}\widetilde{\Ee}^{(x,y)}(t\wedge
  T_{n,\varepsilon}).\endaligned$$
Therefore
  $$\widetilde{\Ee}^{(x,y)}(t\wedge T_{n,\varepsilon})\le
  -\frac{ f_{n}(|x-y|)}{ A_{\beta,\alpha}}=-\frac{ |x-y|^\beta}{ A_{\beta,\alpha}}.$$
Letting $t\rightarrow\infty$ and then $n\rightarrow\infty$, we
arrive at
  $$\widetilde{\Ee}^{(x,y)}(T_{}\wedge S_\varepsilon)
  \le -\frac{ |x-y|^\beta}{ A_{\beta,\alpha}}.$$

On the other hand, again by \eqref{proof4}, for any $x$, $y\in\R^d$
with $1/n\le |x-y|<\varepsilon\le \varepsilon_0$,
  $$\aligned \widetilde{\Ee}^{(x,y)}f_{n}\big(|X'_{t\wedge T_{n,\varepsilon}}&-Y'_{t\wedge T_{n,\varepsilon}}|\big)\\
  &=f_{n}(|x-y|)+\widetilde{\Ee}^{(x,y)}\bigg(\int_0^{t\wedge T_{n,\varepsilon}}
  \widetilde{L} f_{n}(| X'_u-Y'_u|)\,du\bigg)\\
  &\le f_{n}(|x-y|).\endaligned$$
This yields that
  $$f_{n}(\varepsilon)\widetilde{\Pp}^{(x,y)}(S_\varepsilon<T_n\wedge t)\le f_{n}(|x-y|).$$
Letting $t\rightarrow\infty$ and then $n\to \infty$ leads to
  $$\widetilde{\Pp}^{(x,y)}(T>S_\varepsilon)\le\frac{|x-y|^\beta}{\varepsilon^\beta}.$$

Therefore, for any $x$, $y\in\R^d$ with $0<|x-y|<\varepsilon\le
\varepsilon_0$,
  $$\aligned
   \widetilde{\Pp}^{(x,y)}(T\ge t)
    &\le \widetilde{\Pp}^{{(x,y)}}(T\wedge S_\varepsilon>t)+\widetilde{\Pp}^{{(x,y)}}(T>S_\varepsilon)\\
  &\le \frac{\widetilde{\Ee}^{(x,y)}(T\wedge S_\varepsilon)}{t}+\frac{|x-y|^\beta}{\varepsilon^\beta}\\
  &\le -\frac{ |x-y|^\beta}{t A_{\beta,\alpha}}+\frac{|x-y|^\beta}{\varepsilon^\beta}\\
  &=|x-y|^\beta\bigg[\frac{1}{\varepsilon^\beta}-\frac{1}{t A_{\beta,\alpha}}\bigg].
  \endaligned$$
Hence, for any $f\in B_b(\R^d)$, $t>0$ and any $x$, $y\in\R^d$ with
$0<|x-y|<\varepsilon\le \varepsilon_0$,
  \begin{align*}
  {|P_t f(x)-P_t f(y)|}&={|\Ee^xf(X_t')-\Ee^yf(Y'_t)|}\\
  &={\big|\widetilde{{\Ee}}^{(x,y)}(f(X_t')-f(Y'_t))\big|}\\
  &={\big|\widetilde{{\Ee}}^{(x,y)}(f(X_t')-f(Y'_t))\I_{\{T\ge t\}}\big|}\\
  &\le\|f\|_\infty {\widetilde{{\Pp}}^{(x,y)}(T\ge t)}\\
  &\le\|f\|_\infty|x-y|^\beta\bigg[\frac{1}{\varepsilon^\beta}-\frac{1}{t A_{\beta,\alpha}}\bigg],
  \end{align*}
which immediately yields that
  $$\sup_{|x-y|\le\varepsilon}\frac{{|P_t f(x)-P_t f(y)|}}{|x-y|^\beta}
  \le \|f\|_\infty\bigg[\frac{1}{\varepsilon^\beta}-\frac{1}{t A_{\beta,\alpha}}\bigg].$$

This along with the fact that
  $$\sup_{|x-y|\ge\varepsilon}\frac{{|P_t f(x)-P_t f(y)|}}{|x-y|^\beta}
  \le 2\|f\|_\infty\, \varepsilon^{-\beta}$$
further gives us that for all $\varepsilon\in(0,\varepsilon_0]$,
  $$\sup_{x\neq y}\frac{{|P_t f(x)-P_t f(y)|}}{|x-y|^\beta}\le
  2\|f\|_\infty \bigg[\frac{1}{\varepsilon^\beta}-\frac{1}{t A_{\beta,\alpha}}\bigg]
  = 2\|f\|_\infty\bigg[\frac{1}{\varepsilon^\beta}+\frac{\varepsilon^{\alpha_0-\beta}}{C_0t}\bigg], $$
where $C_0:=C_0(\beta, \alpha(x),n(x,z))$ is a positive
constant independent of $\varepsilon$, e.g.\ see \eqref{proof4}. In
particular, we have
  \begin{equation}\label{zhangxc}
  \sup_{x\neq y}\frac{{|P_t f(x)-P_t f(y)|}}{|x-y|^\beta}
  \le 2\|f\|_\infty\inf_{\varepsilon\in(0,\varepsilon_0]}
  \bigg[\frac{1}{\varepsilon^\beta}+\frac{\varepsilon^{\alpha_0-\beta}}{C_0t}\bigg].
  \end{equation}
Therefore, we complete the proof by taking
$\varepsilon=t^{1/\alpha_0}\wedge \varepsilon_0$ in \eqref{zhangxc}.
\end{proof}

At the end of this subsection, we would like to present the

\begin{proof}[Proof of Proposition $\ref{pro-ex}$] (1) For all $z\in\R^d$ with $|z|\le 1$,
  \begin{equation}\label{rrr}
  \begin{split}
  &\bigg|\int_{\left\{\frac{|x-y|}{2}<|z|\le1\right\}}\langle x-y,z\rangle\bigg(\frac{n(y,z)}{|z|^{d+\alpha(y)}}-\frac{n(x,z)}{|z|^{d+\alpha(x)}}\bigg)\,dz\bigg|\\
  &\le |x-y|\int_{\left\{\frac{|x-y|}{2}<|z|\le1\right\}}|z|\bigg|\frac{n(y,z)}{|z|^{d+\alpha(y)}}
  -\frac{n(x,z)}{|z|^{d+\alpha(x)}}\bigg| \,dz\\
  &\le |x-y|\int_{\left\{\frac{|x-y|}{2}<|z|\le1\right\}}|z|\bigg(\frac{n(y,z)}{|z|^{d+\alpha(y)}}
  -\frac{n(y,z)}{|z|^{d+\alpha(x)\wedge\alpha(y)}}\bigg)\,dz\\
  &\quad+|x-y|\int_{\left\{\frac{|x-y|}{2}<|z|\le1\right\}}|z|\bigg(\frac{n(x,z)}{|z|^{d+\alpha(x)}}
  -\frac{n(x,z)}{|z|^{d+\alpha(x)\wedge\alpha(y)}}\bigg)\,dz\\  &\quad+|x-y|\int_{\left\{\frac{|x-y|}{2}<|z|<1\right\}}|z|\frac{|n(x,z)-n(y,z)|}{|z|^{d+\alpha(x)\wedge\alpha(y)}}\,dz.
  \end{split}\end{equation}

On the one hand, there exists a constant $c_{1,1}>0$ such that for
all $x$, $y\in\R^d$,
  \begin{align*}
  &\int_{\left\{\frac{|x-y|}{2}<|z|\le1\right\}}|z|\frac{|n(x,z)-n(y,z)|}{|z|^{d+\alpha(x)\wedge\alpha(y)}}\,dz\\
  &\le \Big(\sup_{|z|\le 1} |n(x,z)-n(y,z)|\Big) \int_{\left\{\frac{|x-y|}{2}<|z|\le1\right\}}\frac{|z|}{|z|^{d+\alpha(x)\wedge\alpha(y)}}\,dz\\
  &\le c_{1,1}|x-y|^{1-\alpha(x)\wedge\alpha(y)}\Big(\sup_{|z|\le 1} |n(x,z)-n(y,z)|\Big),
  \end{align*}
where in the last inequality we have used the fact that
$\inf_{x\in\R^d}\alpha(x)>1$. On the other hand, by \eqref{eee},
there exists a constant $c_{1,2}>0$ such that for all $x$,
$y\in\R^d$,
  \begin{align*} &\int_{\left\{\frac{|x-y|}{2}<|z|\le1\right\}}|z|\bigg(\frac{n(x,z)}{|z|^{d+\alpha(x)}}
  -\frac{n(x,z)}{|z|^{d+\alpha(x)\wedge\alpha(y)}}\bigg)\,dz\\
  &\le c_2 |\alpha(x)-\alpha(y)|\int_{\left\{\frac{|x-y|}{2}<|z|\le1\right\}}\frac{|z|}{|z|^{d+\alpha(x)}}\Big(\log \frac{1}{|z|}\Big)\,dz\\
  &\le c_{1,2 }|\alpha(x)-\alpha(y)| |x-y|^{1-\alpha(x)}\log \frac{1}{|x-y|}\\
  &\le c_{1,2} |\alpha(x)-\alpha(y)| |x-y|^{1-\alpha(x)\wedge\alpha(y)}
  |x-y|^{-|\alpha(x)-\alpha(y)|}\log \frac{1}{|x-y|}, \end{align*}
where in the second inequality we have used again that
$\inf_{x\in\R^d} \alpha(x)>1$. Similarly, we have that
  \begin{align*} &\int_{\left\{\frac{|x-y|}{2}<|z|<1\right\}}|z|\bigg(\frac{n(y,z)}{|z|^{d+\alpha(y)}}
  -\frac{n(y,z)}{|z|^{d+\alpha(x)\wedge\alpha(y)}}\bigg)\,dz\\
  &\le c_{1,3} |\alpha(x)-\alpha(y)||x-y|^{-\alpha(x)\wedge\alpha(y)+1}
  |x-y|^{-|\alpha(x)-\alpha(y)|}\log \frac{1}{|x-y|}
  \end{align*}
holds for some constant $c_{1,3}>0$ and all $x$, $y\in\R^d$.

Combining all the conclusions above with \eqref{h3} and \eqref{ppp},
we get \eqref{h4}.

\smallskip

(2) We mainly follow the arguments of part (1), and here we only
sketch the proof.

Since $\sup_{x\in\R^d} \alpha(x)<1$, there exist constants
$c_{2,1}$, $c_{2,2}>0$ such that for all $x$, $y\in\R^d$,
 \begin{align*}&\int_{\left\{\frac{|x-y|}{2}<|z|\le1\right\}}|z|\frac{|n(x,z)-n(y,z)|}{|z|^{d+\alpha(x)\wedge\alpha(y)}}\,dz\le c_{2,1}\Big(\sup_{|z|\le 1} |n(x,z)-n(y,z)|\Big) ,\end{align*}
  and \begin{align*} &\int_{\left\{\frac{|x-y|}{2}<|z|\le1\right\}}|z|\bigg(\frac{n(x,z)}{|z|^{d+\alpha(x)}}
  -\frac{n(x,z)}{|z|^{d+\alpha(x)\wedge\alpha(y)}}\bigg)\,dz
\le c_{2,2} |\alpha(x)-\alpha(y)|. \end{align*}
This, along with \eqref{rrr} and \eqref{ttt}, gives us the required assertion.
\end{proof}

\subsection{Proof of Theorem \ref{f-thm}}
In this part, suppose that $\alpha_0=\inf_{x\in\R^d} \alpha(x)>1$.
We first give an elementary lemma.

\begin{lemma}\label{3-lem-1}
Let $\varphi\in \mathscr{D}$ and $f(r)=\int_0^r\varphi(s)\,ds$ for all $r\in(0,2]$.
Then, for any $0\le \delta\le a\leq 1$,
  $$f(a+\delta)+f(a-\delta)-2f(a)\leq \varphi'(2a)\delta^2.$$
\end{lemma}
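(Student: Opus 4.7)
The plan is to reduce the inequality to a one-variable Taylor argument in $\delta$ and exploit the monotonicity of $\varphi'$ (which is a negative, increasing function because $\varphi\in\mathscr{D}$ satisfies $\varphi'<0,\varphi''>0$).

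Fix $a\in(0,1]$ and define
$$g(\delta):=f(a+\delta)+f(a-\delta)-2f(a), \qquad \delta\in[0,a].$$
Since $f'=\varphi$, we immediately get $g(0)=0$, $g'(\delta)=\varphi(a+\delta)-\varphi(a-\delta)$ so $g'(0)=0$, and
$$g''(\delta)=\varphi'(a+\delta)+\varphi'(a-\delta).$$

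The key step is the pointwise bound $g''(u)\le 2\varphi'(2a)$ for all $u\in[0,\delta]\subset[0,a]$. Indeed, $\varphi'$ is increasing (as $\varphi''>0$), and both $a+u\le 2a$ and $a-u\le 2a$ hold, so $\varphi'(a+u)\le\varphi'(2a)$ and $\varphi'(a-u)\le\varphi'(2a)$; adding these gives the claim. (Note that although $\varphi'(2a)<0$, no sign flip occurs, since we are only adding inequalities.)

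Now integrate twice: from $g'(0)=0$ we get $g'(s)=\int_0^s g''(u)\,du\le 2\varphi'(2a)\,s$, and from $g(0)=0$ we obtain
$$g(\delta)=\int_0^\delta g'(s)\,ds\le 2\varphi'(2a)\int_0^\delta s\,ds=\varphi'(2a)\,\delta^2,$$
which is the desired inequality. There is no real obstacle here; the only subtlety is remembering that $\varphi'(2a)$ is negative, so the estimate $g''\le 2\varphi'(2a)$ is genuinely a strong (concavity-type) upper bound rather than a trivial one, and that the constraint $\delta\le a$ is precisely what makes $a+\delta\le 2a$ and lets us compare $\varphi'$ at $a\pm\delta$ with $\varphi'(2a)$.
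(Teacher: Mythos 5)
Your proof is correct and relies on the same key facts as the paper's: $\varphi''>0$ makes $\varphi'$ increasing, and $\delta\le a$ gives $a+\delta\le 2a$, so every value of $\varphi'$ in play is bounded above by $\varphi'(2a)$. The packaging is slightly different — the paper writes $f(a+\delta)+f(a-\delta)-2f(a)=\int_a^{a+\delta}\!ds\int_{s-\delta}^s\varphi'(r)\,dr$ and bounds the inner integrand, whereas you introduce $g(\delta)$ with $g(0)=g'(0)=0$ and bound $g''$ — but these are the same double integration in different clothes, so this counts as essentially the same approach.
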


\begin{proof} For any $0\le \delta\le a$,
  \begin{align*}
  f(a+\delta)+f(a-\delta)-2f(a)&=\int_a^{a+\delta}d s\int_{s-\delta}^s \varphi'(r)\,d r\\
    &\leq \varphi'(a+\delta)\delta^2\leq \varphi'(2a)\delta^2,
  \end{align*}
where in the two inequalities we have used the fact that $\varphi''>0$.
\end{proof}

Let $\varphi\in \mathscr{D}$ and $f$ be the function defined in Lemma \ref{3-lem-1}. For any $n\ge1$, define an increasing
function $f_{n}\in C_b^2([0,\infty))$ such that $ f_{n}(r)\le
f(r)$ for all $0<r\le 2$, and
  $$f_{n}(r)=
  \begin{cases}
  f(r), &1/n\le r\le2;\\
  f(2)+1,&r\ge 3.\\
  \end{cases}
  $$
The next result is analogous to Proposition \ref{3-prop-0}.

\begin{proposition}\label{3-prop-1}
There exist three positive constants $C_1$, $C_2$ and $C_3$
such that for all $n\ge 1$ and for any $x$,
$y\in \R^d$ with $1/n\le |x-y|\le 1$,
  \begin{equation*}
  \begin{split}
  & \widetilde{L} f_{n}(|x-y|)\\
  &\le C_1\varphi'(2|x-y|)|x-y|^{2-\alpha(x)\wedge\alpha(y)}+C_2\\
  &\hskip13pt + C_3 \varphi(|x-y|) |x-y|^{1-\alpha(x)\wedge\alpha(y)}\bigg[|\alpha(x)-\alpha(y)|\Big(\log\frac{1}{|x-y|}\Big) |x-y|^{-|\alpha(x)-\alpha(y)|}\\
  &\qquad\quad +\bigg(\sup_{|z|\le 1}
  |n(x,z)-n(y,z)|+\sup_{z\in\R^d,|z_1-z_2|\le |x-y|}|n(z,z_1)-n(z,z_2)|\bigg)\bigg].
  \end{split}
  \end{equation*}
\end{proposition}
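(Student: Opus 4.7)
The plan is to mimic the proof of Proposition~\ref{3-prop-0} with the power function $r\mapsto r^\beta$ replaced throughout by $f_n(r)$. I split $\widetilde L f_n(|x-y|)=\widetilde L_1 f_n(|x-y|)+\widetilde L_2 f_n(|x-y|)$ and further write $\widetilde L_1 f_n=\widetilde L_{1,1}f_n+\widetilde L_{1,2}f_n$ as in the earlier proof, with $\widetilde L_{1,1}$ collecting the reflected joint jumps and $\widetilde L_{1,2}$ the residual single-coordinate jumps. For $\widetilde L_{1,1}$, property~(A3) and the identity~\eqref{f-1} reduce the integrand, after taking norms, to
\[
f_n\!\left(|x-y|+\tfrac{2\langle x-y,z\rangle}{|x-y|}\right)+f_n\!\left(|x-y|-\tfrac{2\langle x-y,z\rangle}{|x-y|}\right)-2f_n(|x-y|).
\]
Since $|z|\le|x-y|/2$ gives $\delta:=2|\langle x-y,z\rangle|/|x-y|\le 2|z|\le|x-y|=:a$, Lemma~\ref{3-lem-1} produces a negative integrand of size $\varphi'(2|x-y|)\delta^2$. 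Using $\tilde n(x,y,z)\ge c_1$ and rotational invariance of $|z|^{-d-\alpha(x)\wedge\alpha(y)}\,dz$ exactly as in Proposition~\ref{3-prop-0}, this delivers the first target term $C_1\varphi'(2|x-y|)|x-y|^{2-\alpha(x)\wedge\alpha(y)}$.

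For $\widetilde L_{1,2}f_n$, the function $f$ is concave since $f''=\varphi'<0$, so the tangent-line inequality $f_n(b)-f_n(a)\le \varphi(a)(b-a)$ combines with~\eqref{f-0} to yield
\[
f_n(|x-y+z|)-f_n(|x-y|)-\varphi(|x-y|)\,\tfrac{\langle x-y,z\rangle}{|x-y|}\le \tfrac{\varphi(|x-y|)}{|x-y|}\cdot\tfrac{|z|^2}{2},
\]
and symmetrically for the $y$-shift. Inserting this alongside the kernel decomposition~\eqref{f-2} and the bounds~\eqref{eee} and~\eqref{f-3}, and using $\alpha(x)\wedge\alpha(y)\ge\alpha_0$ to evaluate the radial integrals $\int |z|^{2-d-\alpha}\log(1/|z|)\,dz$, produces a contribution of size $C\varphi(|x-y|)|x-y|^{1-\alpha(x)\wedge\alpha(y)}A(x,y)$, which is part of the $C_3$ term.

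For $\widetilde L_2 f_n$, the ``diagonal'' jump contribution where both coordinates shift by the same $z$ vanishes identically, because $f_n(|x-y|)$ is shift-invariant under such a move and the gradients in $x$ and $y$ of $F(|x-y|)$ are opposite. I split the remaining two single-coordinate jump integrals at $|z|=1$. On $\{|z|>1\}$ the bound $f_n\le f(2)+1$ together with integrability at infinity of the kernel difference (using $n\le c_2$ and $\alpha_0>0$) gives the universal constant $C_2$. On $\{|x-y|/2<|z|\le 1\}$, the concavity argument from $\widetilde L_{1,2}$ isolates a nonlinear piece contributing again to $C_3$, plus a linear drift piece of the form
\[
\frac{\varphi(|x-y|)}{|x-y|}\int_{\{|x-y|/2<|z|\le 1\}}\!\langle x-y,z\rangle\!\left[\frac{n(y,z)}{|z|^{d+\alpha(y)}}-\frac{n(x,z)}{|z|^{d+\alpha(x)}}\right]\!dz.
\]

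The main obstacle is controlling this last drift integral. In the proof of Proposition~\ref{3-prop-0} the analogous quantity was kept as a separate error term (the third line of~\eqref{proof4-0}) whose absorption depends on assumption~(H4). Here the hypothesis $\alpha_0>1$ allows me to bound it directly, by reproducing the three-piece decomposition~\eqref{rrr} from the proof of Proposition~\ref{pro-ex}(1): the two $\alpha$-logarithmic pieces are controlled via~\eqref{eee}, the $n$-modulus piece by a direct estimate, and the resulting radial integrals $\int_{|x-y|/2<|z|\le 1}|z|^{1-d-\alpha}\,dz$ are finite and of order $|x-y|^{1-\alpha(x)\wedge\alpha(y)}$ precisely because $\alpha_0>1$. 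The resulting estimate fits into the $C_3$ term, and combining the three groups of contributions yields the claimed inequality.
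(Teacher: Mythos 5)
Your global decomposition and the treatments of $\widetilde L_{1,1}$ (via Lemma~\ref{3-lem-1}) and $\widetilde L_{1,2}$ (tangent-line inequality~\eqref{ooo} plus~\eqref{f-0}, \eqref{f-2}, \eqref{eee}, \eqref{f-3}) coincide with the paper's. The genuine difference is in $\widetilde L_2$ on the annulus $\{|x-y|/2<|z|\le 1\}$. The paper does \emph{not} isolate a drift integral there: it applies \eqref{ooo} to the entire first-order Taylor remainder and then uses the crude bound
\[
|x-y\pm z|-|x-y|\mp\frac{\langle x-y,z\rangle}{|x-y|}\le 2|z|,
\]
obtaining $2\varphi(|x-y|)|z|$ in one stroke; the term you call the ``linear drift piece'' never appears and no appeal to Proposition~\ref{pro-ex}(1) is needed. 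Your alternative of splitting off the drift $\frac{\varphi(|x-y|)}{|x-y|}\int_{\{|x-y|/2<|z|\le1\}}\langle x-y,z\rangle[\cdots]\,dz$ and controlling it by re-running the computation in the proof of Proposition~\ref{pro-ex}(1) is correct (the factor $|x-y|^{1-\alpha(x)\wedge\alpha(y)}$ indeed comes out because $\alpha_0>1$), but it duplicates work that the paper's route avoids. One caution about your ``nonlinear piece'': on this annulus you must bound $f_n(|x-y\pm z|)-f_n(|x-y|)\le\varphi(|x-y|)|z|$ (tangent line plus $\big||x-y\pm z|-|x-y|\big|\le|z|$), \emph{not} the quadratic bound $\varphi(|x-y|)|z|^2/(2|x-y|)$ from~\eqref{f-0} that you used in $\widetilde L_{1,2}$. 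With the quadratic bound, the radial integral $\int_{\{|x-y|/2<|z|\le1\}}|z|^{2-d-\alpha(x)\wedge\alpha(y)}\,dz$ is $O(1)$ rather than $O(|x-y|^{2-\alpha(x)\wedge\alpha(y)})$, which would give a contribution of order $\varphi(|x-y|)|x-y|^{-1}A(x,y)$; this dominates the claimed $\varphi(|x-y|)|x-y|^{1-\alpha(x)\wedge\alpha(y)}A(x,y)$ for small $|x-y|$ and would not close the argument.
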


\begin{proof} We follow the line of arguments for proving Proposition \ref{3-prop-0}.

(1)  First, for any $x$, $y\in \R^d$ with $1/n\le |x-y|\le 1$, by \eqref{f-1} and
the facts that $f_n(r)\le f(r)$ for all $r\in(0,2]$ and $f_n(r)=f(r)$ for all $r\in[1/n,2]$,
  \begin{align*}
  & \widetilde{L}_1 f_n(|x-y|)\\
  &\le \frac{1}{2}\bigg[\int_{\left\{|z|\le\frac{|x-y|}{2}\right\}}\!
  \Big(f\big(\big|x-y+(z-\varphi_{x,y}(z))\big|\big)+f\big(\big|x-y-(z-\varphi_{x,y}(z))\big|\big)\\
  &\hskip90pt -2f(|x-y|)\Big)\frac{\tilde{n}(x,y,z)}{|z|^{d+\alpha(x)\wedge\alpha(y)}}\,dz\bigg]\\
  &\quad +\bigg[\int_{\left\{|z|\le\frac{|x-y|}{2}\right\}} \Big(f(|x-y+z|)-f(|x-y|)
  -f'(|x-y|)\frac{\< x-y,z\>}{|x-y|}\Big)\\
  &\hskip90pt \times\bigg(\frac{n(x,z)}{|z|^{d+\alpha(x)}}-\frac{\tilde{n}(x,y,z)}{|z|^{d+\alpha(x)\wedge\alpha(y)}}\bigg)\,dz\\
  &\hskip30pt + \int_{\left\{|z|\le\frac{|x-y|}{2}\right\}} \Big(f(|x-y-z|)-f(|x-y|)
  +f'(|x-y|)\frac{\< x-y,z\>}{|x-y|} \Big)\\
  &\hskip95pt \times\bigg(\frac{n(y,z)}{|z|^{d+\alpha(y)}}
  -\frac{\tilde{n}(x,y,z)}{|z|^{d+\alpha(x)\wedge\alpha(y)}}\bigg)\,dz\bigg]\\
  &=: \widetilde{L}_{1,1}f(|x-y|)+ \widetilde{L}_{1,2}f(|x-y|).
  \end{align*}

According to the definition of $\varphi_{x,y}(z)$ and Lemma \ref{3-lem-1}, for all $x,y,z\in\R^d$ with $|x-y|\le 1$ and $|z|\le |x-y|/2$, we have
  \begin{align*}
  &f\big(\big|x-y+(z-\varphi_{x,y}(z))\big|\big)+f\big(\big|x-y-(z-\varphi_{x,y}(z))\big|\big)-2f(|x-y|)\\
  &\leq 4\varphi'(2|x-y|)\frac{\<x-y,z\>^2}{|x-y|^2}.
  \end{align*}
As a result, since $n(x,z)\geq c_1$ for all $x$, $z\in\R^d$, there exists a constant
$c_{1,1}>0$ such that for all $x$, $y\in\R^d$ with $1/n\le |x-y|\le
1$,
  \begin{equation}\label{L11}
  \begin{split}
  \widetilde{L}_{1,1}f(|x-y|)
  &\leq 4\varphi'(2|x-y|)
  \int_{\left\{|z|\le\frac{|x-y|}{2}\right\}}\frac{\<x-y,z\>^2}{|x-y|^2}
  \cdot\frac{\tilde{n}(x,y,z)}{|z|^{d+\alpha(x)\wedge\alpha(y)}}\,dz\\
  &\leq 4c_1\varphi'(2|x-y|)\int_{\left\{|z|\le\frac{|x-y|}{2}\right\}}
  \frac{\<x-y,z\>^2}{|x-y|^2|z|^{d+\alpha(x)\wedge\alpha(y)}}\,dz\\
  &\leq c_{1,1}\varphi'(2|x-y|)|x-y|^{2-\alpha(x)\wedge\alpha(y)},
  \end{split}
  \end{equation} where in the last inequality we have used the fact that $\alpha(x)\wedge\alpha(y)\ge \alpha_0$ for all $x$, $y\in\R^d$.

Next we estimate the term $\widetilde{L}_{1,2}f(|x-y|)$. Since $\varphi'(r)<0$ for all $r\in (0,2]$,
  \begin{equation}\label{ooo}
  f(b)-f(a)\le \varphi(a)(b-a), \quad a, b\in (0,2].
  \end{equation}
We have for any $x,y, z\in\R^d$ with $1/n\leq
|x-y|\leq 1$ and $|z|\le |x-y|/2$ that
  \begin{align*}
  &f(|x-y+z|)-f(|x-y|)-f'(|x-y|)\frac{\< x-y,z\>}{|x-y|}\\
  &\leq \varphi(|x-y|)\left[(|x-y+z|-|x-y|) -\frac{\< x-y,z\>}{|x-y|}\right].
  \end{align*}
 Thus, by \eqref{f-0}, for any $x$, $y\in \R^d$ with
$1/n\le |x-y|\le 1$, we arrive at
  \begin{align*}
  \widetilde{L}_{1,2}f_n(|x-y|)
  &\le \frac{\varphi(|x-y|)}{2|x-y|}\int_{\left\{|z|\le\frac{|x-y|}{2}\right\}}
  |z|^2\bigg(\frac{n(x,z)}{|z|^{d+\alpha(x)}}-\frac{\tilde{n}(x,y,z)}{|z|^{d+\alpha(x)\wedge\alpha(y)}}\bigg)\,dz\\
  &\quad+ \frac{\varphi(|x-y|)}{2|x-y|}
  \int_{\left\{|z|\le\frac{|x-y|}{2}\right\}} |z|^2\bigg(\frac{n(y,z)}{|z|^{d+\alpha(y)}}
  -\frac{\tilde{n}(x,y,z)}{|z|^{d+\alpha(x)\wedge\alpha(y)}}\bigg)\,dz.
  \end{align*}
This, along with \eqref{f-2}, \eqref{eee},  \eqref{f-3} and the fact
that $n(x,z)\le c_2$ for all $x$, $z\in\R^d$, yields the existence of a
constant $c_{1,2}>0$ such that for all $x$, $y\in \R^d$ with $1/n\le |x-y|\le 1$,
  \begin{align*}
  & \widetilde{L}_{1,2}f(|x-y|)\\
  &\le \frac{c_2|\alpha(x)-\alpha(y)|\varphi(|x-y|)}{2|x-y|}\int_{\left\{|z|\le\frac{|x-y|}{2}\right\}}
\bigg[\frac{|z|^2}{|z|^{d+\alpha(x)}}+\frac{|z|^2}{|z|^{d+\alpha(y)}}\bigg]\Big(\log \frac{1}{|z|}\Big)\,dz\\
  &\hskip13pt + \frac{\varphi(|x-y|)}{|x-y|}\bigg( \sup_{|z|\le 1} |n(x,z)-n(y,z)|
  +\sup_{z\in\R^d,|z_1-z_2|\le |x-y|}|n(z,z_1)-n(z,z_2)|\bigg)\\
  &\hskip80pt \times \int_{\left\{|z|\le\frac{|x-y|}{2}\right\}}
  \frac{|z|^2}{|z|^{d+\alpha(x)\wedge\alpha(y)}}\,dz\\
  &\le c_{1,2} |x-y|^{1-\alpha(x)\wedge\alpha(y)}\varphi(|x-y|)\bigg[|\alpha(x)-\alpha(y)|\Big(\log\frac{1}{|x-y|}\Big) |x-y|^{-|\alpha(x)-\alpha(y)|}\\
  &\qquad\quad +\bigg(\sup_{|z|\le 1}
  |n(x,z)-n(y,z)|+\sup_{z\in\R^d,|z_1-z_2|\le |x-y|}|n(z,z_1)-n(z,z_2)|\bigg)\bigg],
  \end{align*}
where in the last inequality we have used the fact that
$\alpha(x)\wedge\alpha(y)\ge \alpha_0$ for all $x$, $y\in\R^d.$

(2) Secondly, for any $x$, $y\in \R^d$ with $1/n\le |x-y|\le 1$,
  \begin{align*}
  & \widetilde{L}_2 f_n(|x-y|)\\
  &= \int_{\left\{|z|>\frac{|x-y|}{2}\right\}}
  \Big(f_n(|x-y+z|)-f_n(|x-y|)-f'_n(|x-y|)\frac{\< x-y,z\>}{|x-y|} \I_{\{|z|\le 1\}}\Big)\\
  &\hskip75pt \times\bigg(\frac{n(x,z)}{|z|^{d+\alpha(x)}}-\frac{n(x,z)\wedge n(y,z)}{|z|^{d+\alpha(x)}\vee |z|^{d+\alpha(y)}}\bigg)\,dz\\
  &\quad+ \int_{\left\{|z|>\frac{|x-y|}{2}\right\}}
  \Big(f_n(|x-y-z|)-f_n(|x-y|)+f'_n(|x-y|)\frac{\< x-y,z\>}{|x-y|} \I_{\{|z|\le 1\}}\Big)\\
  &\hskip75pt \times\bigg(\frac{n(y,z)}{|z|^{d+\alpha(y)}}-\frac{n(x,z)\wedge n(y,z)}{|z|^{d+\alpha(x)}\vee |z|^{d+\alpha(y)}}\bigg)\,dz.
  \end{align*}
We separate each one of the above two integrals into two parts: one
part with integral domain $\{|z|>1\}$ and the other with
$\big\{\frac{|x-y|}2\leq |z|\leq 1\big\}$, and then we sum the
corresponding terms with the same integral domain. In this way we
get two quantities $I_1$ and $I_2$:
  \begin{align*}
  I_1&=\int_{\{|z|>1\}}\Big(f_n(|x-y+z|)-f_n(|x-y|)\Big)\bigg(\frac{n(x,z)}{|z|^{d+\alpha(x)}}
  -\frac{n(x,z)\wedge n(y,z)}{|z|^{d+\alpha(x)}\vee |z|^{d+\alpha(y)}}\bigg)\,dz\\
  &\hskip13pt +\int_{\{|z|>1\}}\Big(f_n(|x-y-z|)-f_n(|x-y|)\Big)\bigg(\frac{n(y,z)}{|z|^{d+\alpha(y)}}
  -\frac{n(x,z)\wedge n(y,z)}{|z|^{d+\alpha(x)}\vee |z|^{d+\alpha(y)}}\bigg)\,dz,\\
  I_2&=\int_{\left\{\frac{|x-y|}{2}<|z|\leq 1\right\}}\Big(f_n(|x-y+z|)-f_n(|x-y|)-f'_n(|x-y|)\frac{\< x-y,z\>}{|x-y|}\Big)\\
  &\hskip80pt \times\bigg(\frac{n(x,z)}{|z|^{d+\alpha(x)}}-\frac{n(x,z)\wedge n(y,z)}{|z|^{d+\alpha(x)}\vee |z|^{d+\alpha(y)}}\bigg)\,dz\\
  &\hskip13pt +\int_{\left\{\frac{|x-y|}{2}<|z|\leq 1\right\}}\Big(f_n(|x-y-z|)-f_n(|x-y|)
  +f'_n(|x-y|)\frac{\< x-y,z\>}{|x-y|}\Big)\\
  &\hskip85pt \times\bigg(\frac{n(y,z)}{|z|^{d+\alpha(y)}}-\frac{n(x,z)\wedge n(y,z)}{|z|^{d+\alpha(x)}\vee |z|^{d+\alpha(y)}}\bigg)\,dz.
  \end{align*}

On the one hand, noticing that $f_n(r)\le f(2)+1$ for all $r>0$ and $n(x,z)\le c_2$ for all $x$, $z\in\R^d$, we have
  \begin{align*}
  I_1&\leq (f(2)+1)\int_{\left\{|z|>1\right\}} \bigg(\frac{n(x,z)}{|z|^{d+\alpha(x)}}
  -\frac{n(x,z)\wedge n(y,z)}{|z|^{d+\alpha(x)}\vee |z|^{d+\alpha(y)}}\bigg)\,dz\\
  &\quad+ (f(2)+1)\int_{\left\{|z|>1\right\}} \bigg(\frac{n(y,z)}{|z|^{d+\alpha(y)}}
  -\frac{n(x,z)\wedge n(y,z)}{|z|^{d+\alpha(x)}\vee |z|^{d+\alpha(y)}}\bigg)\,dz\\
  & \le c_2(f(2)+1)\int_{\left\{|z|\ge 1\right\}}
  \bigg(\frac{1}{|z|^{d+\alpha(x)}}+\frac{1}{|z|^{d+\alpha(y)}}\bigg)\,dz\\
  &\le  c_2(f(2)+1)\int_{\left\{|z|\ge 1\right\}}
  \bigg(\frac{1}{|z|^{d+\alpha_0}}+\frac{1}{|z|^{d+\alpha_0}}\bigg)\,dz=: c_{2,1}<\infty.
  \end{align*}
On the other hand, by the definition of $f_n$ and \eqref{ooo}, for all $x,y,z\in\R^d$ with
$1/n\leq |x-y|\le 1$ and $|z|\le 1$,
  \begin{align*}
  &f_n(|x-y+z|)-f_n(|x-y|)-f'_n(|x-y|)\frac{\< x-y,z\>}{|x-y|}\\
  &\le f(|x-y+z|)-f(|x-y|)-f'(|x-y|)\frac{\< x-y,z\>}{|x-y|}\\
  &\leq \varphi(|x-y|)\Big(|x-y+z|-|x-y|-\frac{\< x-y,z\>}{|x-y|}\Big)\\
  &\le 2\varphi(|x-y|)
  |z|,
  \end{align*} where in the last inequality we have used the
  fact that $\varphi(r)>0$ for all $r\in (0,2]$.
Therefore, following the argument of
$\widetilde{L}_{1,2}f(|x-y|)$, we get that for any $x$, $y\in
\R^d$ with $1/n\le |x-y|\le 1$,
  \begin{align*}
  I_2&\leq 2 \varphi(|x-y|)\int_{\left\{\frac{|x-y|}{2}<|z|\le1\right\}}
  |z|\bigg(\frac{n(x,z)}{|z|^{d+\alpha(x)}}-\frac{n(x,z)\wedge n(y,z)}{|z|^{d+\alpha(x)}\vee |z|^{d+\alpha(y)}}\bigg)\,dz\\
  &\quad+ 2  \varphi(|x-y|)\int_{\left\{\frac{|x-y|}{2}<|z|\le1\right\}}
  |z|\bigg(\frac{n(y,z)}{|z|^{d+\alpha(y)}}-\frac{n(x,z)\wedge n(y,z)}{|z|^{d+\alpha(x)}\vee |z|^{d+\alpha(y)}}\bigg)\,dz\\
  &\le c_{2,2}  \varphi(|x-y|)|x-y|^{1-\alpha(x)\wedge\alpha(y)}\bigg[|\alpha(x)-\alpha(y)|\Big(\log\frac{1}{|x-y|}\Big) |x-y|^{-|\alpha(x)-\alpha(y)|}\\
  &\hskip122pt  +\bigg(\sup_{|z|\le 1}
  |n(x,z)-n(y,z)|\bigg)\bigg],
  \end{align*} where in the last inequality we have used the fact that $\alpha(x)\wedge\alpha(y)\ge \alpha_0>1$ for all $x$, $y\in\R^d$.

Combining the estimates on $I_1$ and $I_2$, we obtain for any $x$,
$y\in \R^d$ with $1/n\leq |x-y|\le 1$,
  \begin{align*}
  \widetilde{L}_2 f_n(|x-y|)
  &\le c_{2,1}+c_{2,2}  \varphi(|x-y|)|x-y|^{1-\alpha(x)\wedge\alpha(y)}\bigg[\sup_{|z|\le 1} |n(x,z)-n(y,z)|\\
  &\hskip60pt +|\alpha(x)-\alpha(y)|\Big(\log\frac{1}{|x-y|}\Big) |x-y|^{-|\alpha(x)-\alpha(y)|}\bigg].
  \end{align*}

(3) The required assertion immediately follows from  the two above
estimates on $\widetilde{L}_1f_n(|x-y|)$ and
$\widetilde{L}_2f_n(|x-y|)$.
\end{proof}

As a consequence of Proposition \ref{3-prop-1}, we have
\begin{corollary}\label{f-cor-1}
If \eqref{f-cor-2} holds for some $\varphi\in \mathscr{D}_{\alpha_0}$,
then there exist constants
$\varepsilon_0\in (0,1)$ and $C_0>0$ such that for all $n\ge1$,
$\varepsilon\in(0,\varepsilon_0]$ and any $x$, $y\in \R^d$ with
$1/n\le |x-y|< \varepsilon$,
  \begin{equation}\label{f-cor-3}
  \begin{split}
 \widetilde{L} f_{n}(|x-y|)\le C_0\varphi'(2\varepsilon) \varepsilon^{2-\alpha_0}<0.
  \end{split}
  \end{equation}

\end{corollary}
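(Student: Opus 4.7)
The plan is to start from the three-term upper bound on $\widetilde{L}f_n(|x-y|)$ proved in Proposition \ref{3-prop-1} and show, under $\varphi\in\mathscr{D}_{\alpha_0}$ and \eqref{f-cor-2}, that for $|x-y|$ and $\varepsilon$ both small the dominant negative piece $C_1\varphi'(2|x-y|)|x-y|^{2-\alpha(x)\wedge\alpha(y)}$ swallows the other two pieces and, moreover, can be compared to $C_0\varphi'(2\varepsilon)\varepsilon^{2-\alpha_0}$. The whole argument then reduces to three independent smallness estimates on the three summands of Proposition \ref{3-prop-1}, followed by one monotone comparison pushing the rate to the endpoint $\varepsilon$.

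The two defining properties of $\mathscr{D}_{\alpha_0}$ play distinct roles. The condition $\lim_{r\to 0} r^{\alpha_0-2}/\varphi'(r)=0$ together with $\varphi'<0$ gives $|\varphi'(r)|r^{2-\alpha_0}\to\infty$, which guarantees that, for $\varepsilon_0$ small, the constant $C_2$ is at most $(C_1/4)|\varphi'(2r)|r^{2-\alpha_0}\le (C_1/4)|\varphi'(2r)|r^{2-\alpha(x)\wedge\alpha(y)}$ (the last step uses $r^{2-\alpha_0}\le r^{2-\alpha(x)\wedge\alpha(y)}$ for $r<1$). The condition $\limsup_{r\to 0}\varphi''(r)r/\varphi'(r)<\alpha_0-2$ yields, upon differentiating $g(r):=\varphi'(2r)r^{2-\alpha_0}$, that $g'(r)>0$ for all small enough $r$; since $g<0$, this produces the key monotone comparison $\varphi'(2r)r^{2-\alpha_0}\le\varphi'(2\varepsilon)\varepsilon^{2-\alpha_0}$ for $r<\varepsilon$, exactly matching the right-hand side of \eqref{f-cor-3}. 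For the $A(x,y)$-remainder of Proposition \ref{3-prop-1}, assumption (H2) forces $|x-y|^{-|\alpha(x)-\alpha(y)|}=\exp(|\alpha(x)-\alpha(y)|\log(1/|x-y|))\to 1$, so up to a bounded factor that remainder is controlled by $\varphi(|x-y|)|x-y|^{1-\alpha(x)\wedge\alpha(y)}A(x,y)$; dividing by the modulus of the leading term collapses this to $A(x,y)/\Psi(|x-y|)$, which tends to $0$ by \eqref{f-cor-2}, and can therefore be made at most $(C_1/2)|\varphi'(2r)|r^{2-\alpha(x)\wedge\alpha(y)}$.

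Combining these ingredients, I would choose $\varepsilon_0\in(0,1)$ small enough that all three smallness estimates hold simultaneously on $\{|x-y|\le\varepsilon_0\}$. Then for $\varepsilon\in(0,\varepsilon_0]$ and $1/n\le|x-y|<\varepsilon$ the three-term bound of Proposition \ref{3-prop-1} collapses to
$$\widetilde{L}f_n(|x-y|)\le \frac{C_1}{4}\,\varphi'(2|x-y|)\,|x-y|^{2-\alpha(x)\wedge\alpha(y)}\le \frac{C_1}{4}\,\varphi'(2\varepsilon)\,\varepsilon^{2-\alpha_0},$$
which is \eqref{f-cor-3} with $C_0=C_1/4$. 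The most delicate step is the monotonicity of $g$, the only place where the second defining property of $\mathscr{D}_{\alpha_0}$ is used in an essential way; the rest is bookkeeping on top of Proposition \ref{3-prop-1} and the elementary inequality $r^{2-\alpha_0}\le r^{2-\alpha(x)\wedge\alpha(y)}$ on $(0,1)$.
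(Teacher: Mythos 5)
Your proposal is correct and follows essentially the same line as the paper's own proof of Corollary \ref{f-cor-1}. You start from Proposition \ref{3-prop-1}, use the first defining property $\lim_{r\to0}r^{\alpha_0-2}/\varphi'(r)=0$ (equivalently $|\varphi'(r)|r^{2-\alpha_0}\to\infty$) to absorb the constant $C_2$, use the modulus of continuity assumption (H2) and condition \eqref{f-cor-2} to absorb the $A(x,y)$ remainder after dividing by the leading term (which is exactly $A(x,y)/\Psi(|x-y|)$ up to a bounded factor), and finally invoke the monotonicity of $r\mapsto\varphi'(2r)r^{2-\alpha_0}$ near the origin to push the bound from $|x-y|$ to the endpoint $\varepsilon$. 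This is precisely the structure of the paper's argument; the only difference is cosmetic. You make explicit, via differentiation of $g(r)=\varphi'(2r)r^{2-\alpha_0}$ and the second defining property $\limsup_{r\to0}\varphi''(r)r/\varphi'(r)<\alpha_0-2$, what the paper simply asserts ("the function $r\mapsto\varphi'(2r)r^{2-\alpha_0}$ is increasing near the origin"), and you track the absorption constants ($C_1/4$, $C_1/2$, etc.) more explicitly than the paper's brief $-2C_2/C_1$ step. Both are sound.
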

\begin{proof}
Due to $\varphi\in \mathscr{D}_{\alpha_0}$,
  \begin{equation}\label{iii}
  \lim_{r\to0} \varphi'(r)r^{2-\alpha_0}=-\infty,
  \end{equation}
and so there exists a constant $\varepsilon_1\in(0,1)$ such that for all $x$, $y\in\R^d$ with
$|x-y|\le \varepsilon_1$,
  $$\varphi'(2|x-y|)|x-y|^{2-\alpha(x)\wedge\alpha(y)}\le\varphi'(2|x-y|)|x-y|^{2-\alpha_0}\le -2C_2/C_1.$$
This, along with \eqref{f-cor-2}, \eqref{iii} and the fact that
$\varphi'(r)<0$ for $r\in(0,1]$, yields that there exist constants
$\varepsilon_2\in (0,\varepsilon_1)$ and $C_4>0$ such that for all
$n\ge1$, $\varepsilon\in(0,\varepsilon_2]$ and any $x$, $y\in \R^d$
with $1/n\le |x-y|< \varepsilon_2$,
  $$\widetilde{L} f_{n}(|x-y|)\le C_4\varphi'(2|x-y|) |x-y|^{2-\alpha_0}.$$
Again by $\varphi\in \mathscr{D}_{\alpha_0}$, we know that the function $r\mapsto \varphi'(2r)r^{2-\alpha_0}$
is increasing near the origin. Combining it with the inequality above, we prove the required assertion.
\end{proof}

Having Corollary \ref{f-cor-1} at hand, we can follow the argument of Theorem \ref{th1.1} to prove Theorem \ref{f-thm}. Here, we only present the

\begin{proof}[Sketch of the Proof of Theorem $\ref{f-thm}$]
We use the notation in the proof of Theorem \ref{th1.1}. By \eqref{f-cor-3}, for any
$\varepsilon\in(0,\varepsilon_0]$ and $x,y\in\R^d$ with $|x-y|\le \varepsilon$,
  $$\widetilde{\Ee}^{(x,y)}(T_{}\wedge S_\varepsilon)
  \le \frac{\int_0^{|x-y|}\varphi(s)\,ds}{-C_0\varphi'(2\varepsilon) \varepsilon^{2-\alpha_0}}$$ and  $$\widetilde{\Pp}^{(x,y)}(T>S_\varepsilon)\le\frac{\int_0^{|x-y|}\varphi(s)\,ds}{\int_0^\varepsilon\varphi(s)\,ds}.$$
This further gives us that for any $h\in B_b(\R^d)$ and $t>0$,
  $$\sup_{x\neq y}\frac{{|P_t h(x)-P_t h(y)|}}{\int_0^{|x-y|}\varphi(s)\,ds}\le
  2\|h\|_\infty \left[\frac{1}{\int_0^\varepsilon\varphi(s)\,ds}-\frac{1}{t C_0\varphi'(2\varepsilon) \varepsilon^{2-\alpha_0}}\right]
  , $$
  which in turn yields the desired assertion.
\end{proof}

\noindent\textbf{Acknowledgements.} We would like to thank
Professors Zhen-Qing Chen, Feng-Yu Wang and Xicheng Zhang for a
number of suggestions and helpful comments. The first author is
grateful to the financial supports of NSFC (No. 11101407), and the
Key Laboratory of RCSDS, CAS (No. 2008DP173182). The second author
would like to thank the research grants from the National Natural
Science Foundation of China (No.\ 11201073) and the Program for New
Century Excellent Talents in Universities of Fujian (No.\ JA12053).


\begin{thebibliography}{99}
\bibitem{BBCK}
Barlow, M.T., Bass, R.F., Chen, Z.-Q. and Kassmann, M.: Non-local
Dirichlet forms and symmetric jump processes, \emph{Trans. Amer.
Math. Soc.} \textbf{361} (2009), 1963--1999.

\bibitem{B1}
Bass, R.F.: Uniqueness in law for pure jump Markov processes,
\emph{Probab. Theory Related Fields} \textbf{79} (1988), 271--287.

\bibitem{B2}
Bass, R.F.: Regularity results for stable-like operators, \emph{J.
Functional Anal.} \textbf{257} (2009), 2693--2722.

\bibitem{BC}
Bass, R.F. and Chen, Z.-Q.: Systems of equations driven by stable
processes, \emph{Probab. Theory Related Fields} \textbf{134} (2006),
175--214.

\bibitem{BK1}
Bass, R.F. and Kassmann, M.: Harnack inequalities for non-local
operators of variable order, \emph{Trans. Amer. Math. Soc.}
\textbf{357} (2005), 837--850.

\bibitem{BK2}
Bass, R.F. and Kassmann, M.: H\"{o}lder continuity of harmonic
functions with respect to operators of variable order, \emph{Comm.
PDE} \textbf{30} (2005), 1249--1259.

\bibitem{BKK}
Bass, R.F. and Kassmann, M. and Kumagai, T.: Symmetric jump
processes: localization, heat kernels, and convergence, \emph{Ann.
de l'Institut H. Poincar\'{e}} \textbf{46} (2010), 59--71.

\bibitem{BL}
Bass, R.F. and Levin, D.A.: Harnack inequality for jump processes,
\emph{Potential Anal.} \textbf{17} (2002), 375--388.




\bibitem{BR}
Bass, R.F. and Ren, H.: Meyers inequality and strong stability for
stable-like operators, \emph{J. Functional Anal.} \textbf{265}
(2013), 28--48.

\bibitem{BT}
Bass, R.F. and Tang, H.: The matingale problem for a class of
stable-like processes, \emph{Stoch. Proc. Appl.} \textbf{119}
(2009), 1144--1167.

\bibitem{ber-for}
Berg, C.\ and Forst, G.: \emph{Potential Theory on Locally Compact
Abelian Groups}, Springer-Verlag, Ergebnisse Math.\ Grenzgeb.\ vol.\
\textbf{87}, Berlin 1975.


\bibitem{BSW}
B\"{o}ttcher, B., Schilling, R.L.\ and Wang, J.: Constructions of
coupling processes for L\'{e}vy processes, \emph{Stoch. Proc. Appl.} \textbf{121} (2011),
1201--1216.

\bibitem{BJ}
Bogdan, K. and Jakubowski, T.: Estimates of the heat kernel of
fractional Laplacian perturbed by gradient operator, \emph{Commun.
Math. Phys.} \textbf{271} (2007), 179--198.

\bibitem{CS}
Caffarelli, L. and Silvestre, L.: The Evans-Krylov theorem for
nonlocal fully nonlinear equations, \emph{Ann. Math.} \textbf{174}
(2011), 1163--1187.


\bibitem{Ce}
Cerrai, S.: \emph{Second Order PDE's in Finite and Infinite
Dimension. A Probabilistic Approach,} Lecture Notes in Math., vol.\
\textbf{1762}, Springer, Berlin, 2001.

\bibitem{CL}
Chen, M.-F. and  Li, S.-F.:  Coupling methods for multi-dimensional
diffusion process, \emph{Ann. Probab.} \textbf{17} (1989), 151--177.

\bibitem{CKK1} Chen, Z.-Q., Kim, P. and Kumagai, T.:
Global heat kernel estimates for symmetric jump processes,
\emph{Trans. Amer. Math. Soc.} \textbf{363} (2011), 5021--5055.

\bibitem{CK2003}
Chen, Z.-Q. and Kumagai, T.:  Heat kernel estimates for stable-like
processes on $d$-set, \emph{Stoch. Proc. Appl.} \textbf{108}
(2003), 27--62.

\bibitem{CK2008}
Chen, Z.-Q. and Kumagai, T.:  Heat kernel estimates for jump
processes of mixed types on metric measure spaces, \emph{Probab.
Theory Related Fields} \textbf{140} (2008), 277--317.





\bibitem{CZ}
Chen, Z.-Q. and Zhang, X.: Heat kernels and analyticity of
non-symmetric jump diffusion semigroups, \emph{Preprint}, 2013, see
arXiv:1306.5015

\bibitem{Cr}
Cranston, M. : A probabilistic approach to gradient estimates, \emph{Canad. Math. Bull.} \textbf{35} (1992), 46--55.

\bibitem{FT}
Fukushima, M. and Uemura, T.: Jump-type Hunt processes generated by
lower bounded semi-Dirichlet forms, \emph{Ann. Probab.} \textbf{40}
(2012), 858--889.



\bibitem{Gt}
Gilbarg, D.\ and Trudinger, N.S.: \emph{Elliptic Partial Differential Equations of Second Order}, Springer, Berlin,
1983, 2nd ed.


\bibitem{H}
Hoh, W.: Pseudo differential operators with negative definite symbols of variable order,
\emph{Revista Matem\'{a}tica Iberoamericana} \textbf{16} (2000), 219--241.

\bibitem{Kolo}
Kolokoltsov, V.: Symmetric stable laws and stable-like
jump-diffusions, \emph{Proc. Lond. Math. Soc.} \textbf{80} (2000),
725--768.

\bibitem{K}
Komatsu, T.: On the martingale problem for pseudo-differential
operators of variable order, \emph{Theory of Stochastic Processes}
\textbf{14} (2008), 42--51.

\bibitem{Kr}
Krylov, N.V.: \emph{Lectures on Elliptic and Parabolic Equations in
H\"older Spaces}, Grad. Stud. Math., vol. 12, Amer. Math. Soc.,
Providence, RI, 1996.


\bibitem{MP}
Mikulevi\v{c}ius, R. and Pragarauskas, H.: On the martingale problem associated with nondegenerate L\'{e}vy operators, \emph{Lithuanian Mathematical Journal}
\textbf{32} (1992), 297--311.

\bibitem{PW}
Priola, E.\ and Wang, F.-Y.: Gradient estimates for diffusion semigroups with singular coefficeints,
\emph{J. Funct. Anal.} \textbf{236} (2006), 244--264.

\bibitem{SSW}
Schilling, R.L., Sztonyk, P.\ and Wang, J.: Coupling property and gradient estimates of L\'evy processes via symbol, \emph{Bernoulli} \textbf{18} (2012), 1128--1149.

\bibitem{SW2014}
Schilling, R.L. and Wang, J.: Lower bounded semi-Dirichlet forms
associated with L\'{e}vy type operators, to appear in
\emph{Festschrift in Honour of Prof. M. Fukushima}, 2014, see
arXiv:1108.3499

\bibitem{Si}
Silvestre, L.: H\"{o}lder estimates for solutions of
integro-differential equations like the fractional Laplace,
\emph{Indiana Univ. Math. J.} \textbf{55} (2006), 1155--1174.

\bibitem{St}
Stroock, D.: Diffusion processes associated with L\'{e}vy
generators, \emph{Z. Wahrscheinlichkeitstheorie verw. Gebiete}
\textbf{32} (1975), 209--244.


\bibitem{T1}
Tang, H.: Uniqueness for the martingale problem assocaited with pure
jump processes of variable order, \emph{Preprint}, 2008, see
arXiv:0712.4137

\bibitem{Ts}
Tsuchiya, M.: L\'{e}vy measure with generalized polar decomposition and the associated SDE with jumps,
\emph{Stochastics Stochastics Rep.} \textbf{38} (1992), 95--117.



\bibitem{W1}
Wang, F.-Y.: Coupling for Ornstein-Uhlenbeck jump processes,
\emph{Bernoulli} \textbf{17} (2011), 1136--1158.

\bibitem{WXZ}
Wang, F.-Y., Xu, L. and Zhang, X.: Gradient estimates for SDEs driven by multiplicative L\'{e}vy noise, \emph{Preprint}, 2013, see arXiv:1301.4528



\end{thebibliography}
\end{document}